 \newtheorem{theorem}{Theorem}[section]
\theoremstyle{definition}
\theoremstyle{remark}
\newtheorem{fact*}{Fact}
\DeclareMathOperator{\Tr}{Tr}
\newcommand{\abs}[1]{\left\vert#1\right\vert}
\newcommand{\til}{\raise.17ex\hbox{$\scriptstyle\mathtt{\sim}$}}
\newcommand\beq{\begin{equation}}
\newcommand\eeq{\end{equation}}
\newcommand{\bbm}{\left[ \begin{smallmatrix}}
\newcommand{\ebm}{\end{smallmatrix} \right]}
\newcommand{\bpm}{\left( \begin{smallmatrix}}
\newcommand{\epm}{\end{smallmatrix} \right)}
\numberwithin{equation}{section}
\newlength{\Mheight}
\newlength{\cwidth}
\definecolor{plummet}{RGB}{142, 69, 133}
\title[Freeness-of-trade]{The geometry of inconvenience and perverse equilibria in trade networks}
\author{
Michael Coopman$^{a}$
}
\author{
Austin Jacobs$^{a}$
}
\author[]{Henry Pascoe$^{b}$
}
\author[]{
J. E. Pascoe$^{c}$
}
\thanks{\\
$^a$ University of Florida, Department of Mathematics, Gainesville, Florida
\\
$^{b}$ IE University, School of Politics, Economics, and Global Affairs, Madrid, Spain. Corresponding author email: henry.pascoe@ie.edu
\\
$^{c}$ Drexel University, Department of Mathematics, Philadelphia, Pennsylvania.  
\\
\vspace{.1cm}
\\
Partially supported by National Science Foundation Mathematical
Science Postdoctoral Research Fellowship  
DMS 1606260 and NSF Analysis grant DMS 1953963 and 2319010 }
\date{}
\begin{document}

\begin{abstract} %
The structure bilateral trading costs is one of the key features of international trade.   Drawing upon the freeness-of-trade matrix, which allows the modeling of N-state trade costs, we develop a ``geometry of inconvenience'' to better understand how they impact equilbrium outcomes. The freeness-of-trade matrix was introduced in a model by Mossay and Tabuchi, where they essentially proved that if a freeness-of-trade matrix is positive definite, then the corresponding model admits a unique equilibrium. Drawing upon the spectral theory of metrics, we prove the model admits nonunique, perverse, equilibria. We use this result to provide a family of policy relevant bipartite examples, with substantive applications to economic sanctions. More generally, we show how the network structure of the freeness of trade is central to understanding the impacts of  policy interventions.
\end{abstract}

\maketitle
\section{Introduction}

Interstate trade is costly. In this paper we consider how the structure of the bilateral costs of trade, including cases where direct trade is prohibitively costly or impossible due to sanctions, geography, or other factors, impact international trade. To do so, we consider imperfect competition models of international trade in the presence of price effects, and characterize the impact of the structure of trading costs in such models upon equilibrium existence and outcomes.  

Mossay and Tabuchi present a three country model which provides a useful starting point for our analysis because, unlike many models of trade, it does not abstract away from such trade costs by assuming factor price equalization. We consider when their results generalize to models with more than three countries, and the consequences for welfare and other key outcomes of interest when they fail to do so.  In particular, we find that partitioned trade networks of the sort we characterize below may result in multiple, perverse, rent-seeking equilibria.  

  Mossay and Tabuchi’s model \citeyearpar{MT15} does not assume factor price equalization, allowing them to consider the size, neighboring, price and integration effects of market liberalization. The model is constructed following Krugman’s three country model \citeyearpar{krugman1980}, like much other scholarship in this area \citep{chaney_2008, ossa_2011, venables_1987}. The Mossay and Tabuchi model is notable for its ability to allow for country size differences and trade cost heterogeneity. Thus, it is a good fit for our central interest, the ease of trade between countries, rather than productivity or factor endowment differences as in the Ricardo and Hecksher-Ohlin models, respectively. 

  In their analysis they show that general equilibrium exist and are unique. They use these results to consider the impact of preferential trade agreements on third states. A major limitation of \cite{MT15} is that it is only a three-country model. They speculate that their results may generalize to an N-country model, providing some limited extension under trade symmetry or country size symmetry. As we note below, it is elementary that their results generalize to four countries. However, we characterize conditions under which their results generalize to models of $N>4$, and comment on the implications for when they do not. By doing so, we identify a “geometry of inconvenience” in trade networks of more than four countries, with implications for international sanctions, among other potential applications. 

In particular, we identify a family of trade networks in which the unique equilibrium identified by \cite{MT15} does not exist, and instead three perverse equilibria exist.\footnote{See \cite{fujita1999}, \cite{mercenier1995}, and \cite{venables_1984}  for discussion of multiple stable equilibrium in models of international trade.} Interestingly, only specific trade configurations result in such equilibria. 
Specifically, we discuss those characterized by ``anti-blocs,'' groups of states that for some reason, such as politics or geography, have no or very little trade, in particular configurations are susceptible to perverse equilibria. We use these results to comment on the impact of economic sanctions on wages, welfare, and prices. We also consider the impact of other unilateral and multilateral policy shifts. 

For instance, the current sanctions by the United States, European Union, and others on Russia have caused a renewed interest in sanctions busting and its consequences. The United Arab Emirates, China, Kazakhstan, Turkey and others are all suspected of helping to bust sanctions by serving as intermediaries between the senders of sanctions and Russia.  This raises the question of how such sanctions busting impacts international trade, and what policy responses may help improve outcomes. In the analysis that follows, we propose a geometric approach to understanding such networks, and engage in policy analysis to address such pressing questions. One key implication is that secondary sanctions, to the extent they isolate sanctions-busting states from each other and the global economy more generally, may be counterproductive, simply increasing opportunities for rent seeking and other undesirable outcomes. Instead, fostering new, more favorable trading options for potential sanctions busters may be a more fruitful approach. 

In sum, all sanctions regimes, even those with similar opportunity cost of forgone direct trade between sender and target, are not created equal. Considering the geometry of inconvenience created by sanctions allows us to comment on their impact, not only on sending and target states, but also third-party states. Furthermore, these results have important implications for attitudes toward globalization. Globalization backlash, rather than a reaction to distributional consequences and imperfect redistribution \citep{milner_1999}, may instead be a reaction to the ill effects of more extensive, cheaper, trade in partitioned systems. The increased use of economics sanctions may, therefore, be connected to rising backlash against globalization.

We next introduce Mossay and Tabuchi’s model, generalized to N countries, along with some mathematical preliminaries. Building on this discussion, we characterize the conditions under which unique equilibrium do not exist for models with more than 4 countries. We then discuss implications of our results for understanding economic sanctions. We next provide policy implications via a series of numerical examples. The final section concludes with a discussion of avenues for future research made possible by our approach.  

\section{The Model}

The model generalizes Mossay and Tabuchi's three country model to arbitrarily many countries, $N$, with a manufacturing sector producing a differentiated good. Here we introduce the primitives of the model before proceeding. Let $L_i$ denote the the mass of immobile workers in country $i$, with $\sum L_i =1$. The utility of an individual country is given by Dixit-Stiglitz preferences,

\begin{equation}
   U_i=  \left[\sum_{j=1}^N \int_{v\in V_j} q_{ji}(v)^{\frac{\sigma-1}{\sigma}}dv\right]^{\frac{\sigma}{\sigma-1}}
\end{equation}

where $q_{ij}(v)$ is the amount of variety $v$ produced in country $j$ and consumed in country $i$, $V_j$ is the set of varieties produced in country $j$ and $\sigma>1$ is the elasticity of substitution between any two varieties. A worker in country $i$ earning wage $w_i$ has the following budget constraint,

\begin{equation}
    \sum_j\int_{v\in V_j} p_{ji}(v)q_{ji}(v)dv = {w_i}
\end{equation}

where $p_{ji}(v)$ is the delivery price of variety $v$ produced in country $j$ and consumed in country $i$. We follow Mossay and Tabuchi in dropping the variety label $v$, and thus tacitly assuming some sufficient homogeneity of varieties produced within some country, for practicality of exposition. Maximizing utility subject to the budget constraint yields worker's demand in country $i$ for variety produced in country $j$

\begin{equation}
    q_{ji}=\frac{p_{ji}^{-\sigma}}{P_i^{1-\sigma}}w_i
\end{equation}

Where $P_i$ is the price index in country $i$, 
\begin{equation}
P_i=\left(\sum_k n_k p_{ki}^{1-\sigma}\right)^{\frac{1}{1-\sigma}}
\end{equation}
 where $n_k$ is the mass of firms in country $k$.

Assuming iceberg trade costs and following Mossay and Tabuchi, let matrix $\Phi$ capture the freeness of trade between all countries.  A matrix $\Phi = (\phi_{ij})_{1 \leq i,j\leq n}$ is a {\bf freeness-of-trade matrix} whenever it satisfies the following properties:
\begin{enumerate}
    \item $0 < \phi_{ij}\leq 1,$
    \item $\phi_{ij}\phi_{jk}\leq \phi_{ik},$
    \item $\phi_{ij}=\phi_{ji},$
    \item $\phi_{ii}=1.$
\end{enumerate}

The freeness-of-trade function measures the amount of loss incurred when sending something from country
$i$ to country $j.$ That is, if we send $1$ unit from $i$ to $j,$ we expect $\phi_{ij}$ units to arrive.
Many natural and artificial factors may fit into this framework,
we could expect rotting or breakage on the way to delivery if we are shipping over a long distance,
products could suffer taxes, tariffs and administrative loss, and so on.
The associated equilibrium equation to the economic model in \cite{MT15} is given by
\beq \label{MTEQ} F_i(v) = v_i - \sum L_j \phi_{ij}v_j^{-\varepsilon}  \eeq
where $\varepsilon$ is some parameter derived from the elasticity
of substitution, (specifically $\frac{\sigma}{\sigma-1}$, where $\sigma>1$ is the elasticity of substitution.) and $v_j$ is a parameter derived from wages, denoted by $\omega$, $v_i=\omega_i^\sigma$.\footnote{Utility in this model is simply wages divided by a price index, $U_i=\frac{\omega_i}{P_i}$.}
Mossay and Tabuchi showed that for nondegenerate freeness-of-trade matrices there is a unique equilibrium, that is a solution to $F = 0,$ for \eqref{MTEQ} whenever $n \leq 3.$
Their proof used the fact that $\Phi$ must be positive definite in that case.

Mossay asked whether or not $\Phi$ must be positive definite for general $n$ for nondegenerate freeness-of-trade matrices. 
We answer this question in the negative and give an example where \eqref{MTEQ} has multiple equilibria. In doing so, we characterize a family of examples involving multiple intermediaries, with applications to economic sanctions and geographical blockages. Before turning to our main results in Section 4, we introduce some mathematical preliminaries which we will draw upon in our analysis of the model described above.

\section{Mathematical Notions}

\subsection{Graph theory}
Consider a finite set of objects.
This can be individuals, towns, or countries.
We will call whatever they are \emph{vertices}.
We want to express connections between them.
This may be familiarity between individuals or adjacencies between countries.
We will call whatever these are \emph{edges}.

A \emph{graph} $G$ consists of two sets: a vertex set $V$ and an edge set $E$.
The vertex set $V$  consists of a collection of objects that we want to encode the connections between. 
 While this need not be restricted to finite sets, our assumption is that there are only finitely many actors in an economic system and so we will assume that $|V|<\infty $.  
For convenience, these will typically be represented as positive integers  e.g. if there are three vertices in a graph they will be referred to as 1, 2, and 3. 
The edge set $E$ consists of pairs $(v_1, v_2)$ where $v_1$ and $v_2$ both belong to the vertex set.
We will say a vertex $v_1$ is \emph{adjacent to} $v_2$ if $(v_1,v_2)$ is an edge of $G$ (an element of $E$).
For an introduction to the use of graph theory in economics, see ~\cite{koenig2009graph}.

In this framework, there are a few common classes of graphs. The \emph{complete graphs} $K_n$ have the vertex set $V = \{1,2,\hdots,n\}$ and the edge set $E = \{(i,j) \mid i \leq j\}$. That is, $E$ is the largest possible edge set to be placed on $V$.
 As an example, a complete graph could represent a collection of countries that can freely trade with each other. 
 Another class are the \emph{bipartite graphs} $K_{n,m}$.
 For these graphs, it is sensible to split the vertex set into two disjoint sets $V = V_1 \sqcup V_2$, where $V_1 = \{1, 2, \hdots, n\}$ and $V_2 = \{n+1, n+2, \hdots, n+m\}$.
The edge set is $E = \{(i,j) \mid i \in V_1 \text{ and } j \in V_2\}$. 
That is, each vertex in $V_1$ is adjacent to each vertex of $V_2$. However, no two vertices of $V_1$ are adjacent; this goes similarly with the vertices of $V_2$.
See Figure \ref{fig:graphs} for an example of a complete graph, $K_5$, and a bipartite graph, $K_{3,2}$. 
These bipartite graphs can be thought of as a group of countries forming two distinct \emph{anti-blocs}. 
The countries of an anti-bloc do not trade with each other due to geographical, societal, and/or political factors.
Instead, they trade with countries of the opposite anti-bloc, who also do not trade amongst themselves.

\begin{figure}
    \begin{subfigure}{0.33\linewidth}
        \begin{tikzpicture}[baseline = (n2.center), scale=0.8]
            \node (n1) at (0,0) {}; 
            \node (n2) at (-1.2,-0.8) {};
            \node (n3) at (-0.8,-2) {};
            
            \node (n4) at (1.2, -0.8) {};
            \node (n5) at (0.8,-2) {};
            
            \node[above] at (n1) {1};
            \node[left] at (n2) {2};
            \node[left] at (n3) {3};
            \node[right] at (n4) {4};
            \node[right] at (n5) {5};
            
            \filldraw[black] (n1) circle(2 pt);
            \filldraw[black] (n2) circle(2 pt);
            \filldraw[black] (n3) circle(2 pt);
            \filldraw[black] (n4) circle(2 pt);
            \filldraw[black] (n5) circle(2 pt);

            \draw[black,thick] (n1)--(n2);
            \draw[black,thick] (n1)--(n3);
            \draw[black,thick] (n1)--(n4);
            \draw[black,thick] (n1)--(n5);

            \draw[black,thick] (n2)--(n3);
            \draw[black,thick] (n2)--(n4);
            \draw[black,thick] (n2)--(n5);
            
            \draw[black,thick] (n3)--(n4);
            \draw[black,thick] (n3)--(n5);

            \draw[black,thick] (n4)--(n5);
        \end{tikzpicture}
    \end{subfigure}
    \begin{subfigure}{0.33\linewidth}
        \begin{tikzpicture}[baseline = (n2.center), scale=0.8]
            \node (n1) at (-1,0) {}; 
            \node (n2) at (-1,-1) {};
            \node (n3) at (-1,-2) {};
            
            \node (n4) at (1, -0.5) {};
            \node (n5) at (1,-1.5) {};
            
            \node[left] at (n1) {1};
            \node[left] at (n2) {2};
            \node[left] at (n3) {3};
            \node[right] at (n4) {4};
            \node[right] at (n5) {5};
            
            \filldraw[black] (n1) circle(2 pt);
            \filldraw[black] (n2) circle(2 pt);
            \filldraw[black] (n3) circle(2 pt);
            \filldraw[black] (n4) circle(2 pt);
            \filldraw[black] (n5) circle(2 pt);
            
            \draw[black,thick] (n1)--(n4);
            \draw[black,thick] (n1)--(n5);
            
            \draw[black,thick] (n2)--(n4);
            \draw[black,thick] (n2)--(n5);
            
            \draw[black,thick] (n3)--(n4);
            \draw[black,thick] (n3)--(n5);
        \end{tikzpicture}
    \end{subfigure}

    \caption{The complete graph $K_5$ (left) and the bipartite graph $K_{3,2}$. Every vertex also has a self-edge.}
    \label{fig:graphs}
\end{figure}
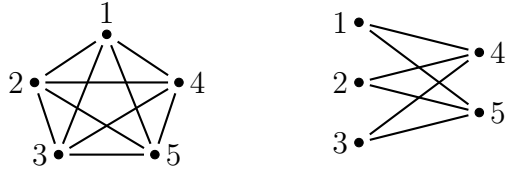

\subsection{Interpreting graphs}
Graphs can be used the encode a group of objects and connections between them.
For instance, we can use graphs to understand the clusters that individuals form through similarities or  which countries are adjacent.
 Even in a graph representation as above, self-edges need not be shown as it is known that every vertex has a self-edge. 
We can even skip any form of drawing and simply represent the graph as an \emph{adjacency matrix} $M$ of size $|V|\times |V|$. 
The $(i,j)$-th entry of $M$ is 1 if $(i,j) \in E$ and 0 otherwise.  

We will say that a graph is \emph{connected} if given  any two vertices $v$ and $w$, there exists a path $v = v_1, v_2, \hdots, v_k = w$ where $v_{i}$ is adjacent to $v_{i+1}$. 
For our purpose of discussing trading networks between countries, all graphs we consider will be connected  for the simple reason that any country that does not have trade relations does not contribute anything to the model.
In fact, it is not a large stretch to say that most countries are adjacent to each other in this sense. 
Any country with a port city could trade to any other country with a port city via sea routes.
Aside from refueling purposes, these routes do not need to go through an intermediary country between the source and destination.
As a further stretch, adding air travel suggests that all countries are adjacent to each other.   Further, even if two countries do not trade together directly, there is almost surely some \textit{indirect} trade through intermediaries, even if it is at several steps removed. 
It seems our notion of adjacency may need to be modified.

Though all countries might be considered adjacent, in the sense that they interact, some countries are more adjacent than others.
Two countries sharing a relatively large land border is safely considered ``adjacent".
However, two landlocked countries half a world away are only ``adjacent" in the loosest sense of the word.
To resolve this, we could assign a value or weight to each edge of the graph.    Instead of only encoding raw adjacency, we could have assigned a value between 0 and 1.
A value of 1 would represent ``full adjacency" and 0 would would represent ``no adjacency" (in graph theory these are referred to as weighted graphs and the matrix encoding these weights is called a weighted adjacency matrix).
Economically speaking, one can use such a weight to encode the percentage of value preserved during the transit of goods between countries, something akin to iceberg trading costs.
This labeling is immediately more relevant to the Mossay-Tabuchi model as it corresponds to the freeness-of-trade matrix.
However, it is not the only mathematically useful way  to weight the edges in our graph.  Instead of noting how efficient transit is, we could instead encode an ``economic distance," but we will first need to make the notion of distance a little more rigorous.

\vspace{.2cm}

\subsection{Metrics}
A \emph{pseudo-metric} on a set of finite points $P$ is a function $d$ that takes two points from $P$ and returns a nonnegative value with the following conditions: for all $x, y, z \in P$,
\begin{enumerate}
    \item $d(x,x) = 0$.
    \item $d(x,y) \geq 0$.
    \item $d(x,y) = d(y,x)$.
    \item $d(x,y) \leq d(x,z) + d(z,y)$.
\end{enumerate}
For completeness, a \emph{metric} is a pseudo-matrix with the added condition that $(1)$ is the only such time that the metric returns 0.   It is particularly worth noting that (pseudo-)metrics are closed under non-negative scaling and addition, which is to say that if $d_1$ and $d_2$ satisfy the conditions for being  pseudo-metrics on $P$ and $a,b$ are non-negative real numbers. then $ad_1+bd_2$ also satisfies the conditions (in order for this to work with true metrics, it is required that at least one of $a$ and $b$ is strictly positive).  Stated another way, this means that pseudo-metrics on $P$ are closed under finite, non-negative, linear combinations.  This means that the set of metrics form a cone, in the sense that if you pick any pseudo-metric and consider all of it's rescalings they will extend from the point of the cone (the all zeros pseudo-metric) to some limiting metric where everything is infinitely far apart.  To understand a cone, you need only understand its extreme rays.


Further, in the same way we might weight each edge of a graph with to represent adjacency, we can also weight each edge with the distance between two vertices.  In its simplest form, each edge value could represent distance between two countries.  The assigned weight will represent the difficulty to trade to the other country.  Since each edge represents distance, we may use a corrupted notion of an adjacency matrix.
Since this does not encode ``adjacency" we will call the corresponding matrix the \emph{distance matrix} $D$.
The $(i,j)$-th entry of $D$ represents the smallest distance between countries $i$ and $j$.
Since the distance matrix is all that matters, it is permissible to remove edges whose value holds no extra information.
That is, if there is an edge between countries $i$ and $j$ with value $D_{i,j}$, and there is a third country $k$ where $D_{ik} + D_{kj} = D_{i,j}$, then we can remove the edge $ij$ without affecting the distance matrix.

\section{Mossay-Tabuchi stability}

We say a freeness-of-trade matrix is {\bf Mossay-Tabuchi stable} whenever
$\Phi_t = (\phi_{ij}^{-\log t})_{1 \leq i,j\leq n}$
is positive semi-definite for all $0<t<1.$
We define the {\bf Mossay-Tabuchi index} to be the largest $t_0$ such that $\Phi_t$
is positive semi-definite for all $0 \leq t \leq t_0.$ Mossay-Tabuchi stability is equivalent to having a Mossay-Tabuchi
stability index of $1.$
The {\bf friction-of-trade metric} to a freeness-of-trade matrix $\Phi$ is given by the formula
    $$M_\Phi = (- \log \phi_{ij})_{1 \leq i,j\leq n} = (m_{ij})_{1 \leq i,j\leq n}.$$
Note that
\begin{enumerate}
    \item $m_{ij}\geq 0,$
    \item $ m_{ik} \leq m_{ij} + m_{jk},$
    \item $m_{ij}=m_{ji},$
    \item $m_{ii}=0,$
\end{enumerate}
and thus the entries of $M_{\Phi}$ define a pseudo-metric. Note that the set of pseudo-metrics form a convex cone. In the case where $\Phi$ is nondegenerate,
$M_{\Phi}$ is indeed a bona fide metric on points $\{1,\ldots, n\}.$

{Note that if the associated pseudo-metric is indeed a metric, the freeness-of-trade matrix $\Phi_t$ will have entries off the main diagonal that all vary with $t$ and the asymptotic behavior of the eigenvalues can be easily understood.  As $t$ tends to 0, the freeness-of-trade matrix  tends to the identity matrix and all of the eigenvalues tend to 1.  As $t$ tends to 1, the matrix tends to the matrix whose entries are all 1 and the eigenvalues are $n$ with multiplicity 1 and $0$ with multiplicity $n-1$.  Since the eigenvalues will asymptotically all be 1 as $t\to0$ each freeness-of-trade matrix must eventually have all positive eigenvalues.  Thus the Mossay-Tabuchi index is well-defined.  Furthermore, if a ray in the cone of metrics will have an associated freeness-of-trade matrix that is not positive semi-definite, it will be near $t=1$ and thus closer to perfect trading efficiency between trading partners.}

A matrix is {\bf conditionally negative semi-definite} if
    $$\sum m_{ij} c_ic_j \leq 0$$
whenever $\sum c_i = 0.$
The following observation is restatement of Schoenberg's theorem from \cite[pp. 129--136]{Paulsen}. 
\begin{theorem}
    Let $\Phi$ be a freeness-of-trade matrix.
    The following are equivalent:
    \begin{enumerate}
        \item $\Phi$ is Mossay-Tabuchi stable,
        \item $M_{\Phi}$ is conditionally negative semi-definite,
        \item $M_\Phi = (\|x_i-x_j\|_2^2)_{1 \leq i,j\leq n}$ for some vectors $x_i \in \mathbb{R}^n.$
    \end{enumerate}
\end{theorem}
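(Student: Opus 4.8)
The plan is to recognize the statement as Schoenberg's theorem in disguise and to prove the three conditions equivalent via the cycle $(3)\Rightarrow(1)\Rightarrow(2)\Rightarrow(3)$. The first move strips away the trade notation by a change of variables. Since $\phi_{ij}=e^{-m_{ij}}$, the entries of $\Phi_t$ are
\[
\phi_{ij}^{-\log t}=e^{m_{ij}\log t}=e^{-s\,m_{ij}},\qquad s:=-\log t,
\]
and as $t$ ranges over $(0,1)$ the parameter $s$ ranges over $(0,\infty)$. Hence Mossay--Tabuchi stability is precisely the assertion that $\bigl(e^{-s m_{ij}}\bigr)_{i,j}$ is positive semi-definite for every $s>0$, which is the hypothesis side of Schoenberg's theorem applied to the symmetric, zero-diagonal matrix $M_\Phi$.

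For $(3)\Rightarrow(1)$, I would assume $m_{ij}=\|x_i-x_j\|_2^2$ and expand the square to obtain the factorization
\[
e^{-s m_{ij}}=e^{-s\|x_i\|_2^2}\,e^{2s\langle x_i,x_j\rangle}\,e^{-s\|x_j\|_2^2}.
\]
The Gram matrix $G=(\langle x_i,x_j\rangle)$ is positive semi-definite, so by the Schur product theorem each Hadamard power $G^{\circ k}$ is as well, whence the entrywise exponential $\bigl(e^{2s\langle x_i,x_j\rangle}\bigr)=\sum_{k\ge0}(2s)^k G^{\circ k}/k!$ is positive semi-definite. Conjugating by the positive diagonal matrix $\diag\bigl(e^{-s\|x_i\|_2^2}\bigr)$ is a congruence and preserves positivity, which gives $(1)$.

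For $(1)\Rightarrow(2)$, I would fix $c$ with $\sum_i c_i=0$ and Taylor expand in $s$ near $0$:
\[
0\le\sum_{i,j}c_ic_j e^{-s m_{ij}}=\Bigl(\sum_i c_i\Bigr)^2-s\sum_{i,j}c_ic_j m_{ij}+O(s^2)=-s\sum_{i,j}c_ic_j m_{ij}+O(s^2).
\]
Dividing by $s>0$ and letting $s\to0^+$ yields $\sum_{i,j}c_ic_j m_{ij}\le0$, which is conditional negative semi-definiteness.

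The substantive step is $(2)\Rightarrow(3)$, where the embedding vectors must be produced from the abstract inequality. Let $J=I-\tfrac1n\mathbf 1\mathbf 1^\top$ be the centering projection and set $B=-\half\,J M_\Phi J$. For any $y$, the vector $c=Jy$ has zero coordinate sum, so conditional negative semi-definiteness of $M_\Phi$ gives $y^\top B y=-\half\,c^\top M_\Phi c\ge0$; thus $B$ is positive semi-definite. Writing $B=X^\top X$ and letting $x_i$ be the columns of $X$, a direct computation using $m_{ii}=0$ and symmetry recovers $B_{ii}+B_{jj}-2B_{ij}=\|x_i-x_j\|_2^2=m_{ij}$, and the $x_i$ lie in $\mathbb R^n$. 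I expect this realization step to be the main obstacle: the work is not in the positivity of $B$ (which is immediate from centering) but in verifying that the centered Gram matrix reproduces the original, uncentered metric, which is exactly where the normalizations $m_{ii}=0$ and $m_{ij}=m_{ji}$ enter.
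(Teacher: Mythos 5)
Your proof is correct. The paper itself offers no proof of this statement --- it is presented as a restatement of Schoenberg's theorem with a citation to Paulsen --- and your argument is exactly the standard proof of that theorem (Hadamard exponential of the Gram matrix plus diagonal congruence for $(3)\Rightarrow(1)$, first-order Taylor expansion at $s=0^+$ for $(1)\Rightarrow(2)$, and the double-centering construction $B=-\tfrac12 JM_\Phi J$ for $(2)\Rightarrow(3)$), correctly adapted to the paper's reparametrization $s=-\log t$, under which $t\in(0,1)$ corresponds to $s\in(0,\infty)$ as you note. The one computation that genuinely needs checking, namely that $B_{ii}+B_{jj}-2B_{ij}=m_{ij}$, does go through: writing $r_i$ for the row means and $\bar m$ for the grand mean of $M_\Phi$, one has $B_{ij}=-\tfrac12\left(m_{ij}-r_i-r_j+\bar m\right)$, and the sum telescopes to $m_{ij}$ precisely because $m_{ii}=m_{jj}=0$, which is where the normalization $\phi_{ii}=1$ of the freeness-of-trade matrix enters.
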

The final condition can be interpreted as a metric proportional to the energy required to transport matter from $x_i$ to $x_j$ in straight line in a fixed amount of time. Such metrics are often called {\bf metrics of negative type} \citep{naor,naor2,acmnotes, khot2015unique}. Note that any metric space which isometrically embeds into $L^1$ is of negative type \citep{naor}. Moreover, any metric space on $4$ points embeds in $L^1.$ One expects some results on metrics of negative type can be generalized or relaxed to those with some fixed Mossay-Tabuchi index. To look for degenerate examples, it is natural to look at the extreme rays of the cone of metrics \citep{grish, avis}.

\begin{figure}[h]

   \begin{subfigure}{0.24\linewidth}

    \begin{tikzpicture}[baseline = (n2.center), scale=0.8]
        \node (n1) at (-1,0) {}; 
        \node (n2) at (-1,-1) {};
        \node (n3) at (-1,-2) {};
        
        \node (n4) at (1, -0.5) {};
        \node (n5) at (1,-1.5) {};
        
        \node[left] at (n1) {1};
        \node[left] at (n2) {2};
        \node[left] at (n3) {3};
        \node[right] at (n4) {4};
        \node[right] at (n5) {5};
        
        \filldraw[black] (n1) circle(2 pt);
        \filldraw[black] (n2) circle(2 pt);
        \filldraw[black] (n3) circle(2 pt);
        \filldraw[black] (n4) circle(2 pt);
        \filldraw[black] (n5) circle(2 pt);
        
        \draw[black,thick] (n1)--(n4);
        \draw[black,thick] (n1)--(n5);
        
        \draw[black,thick] (n2)--(n4);
        \draw[black,thick] (n2)--(n5);
        
        \draw[black,thick] (n3)--(n4);
        \draw[black,thick] (n3)--(n5);
    \end{tikzpicture}
       \end{subfigure}  
    \quad
    \begin{subfigure}{0.28\linewidth}
    $\begin{bmatrix}
    0&2&2&1&1\\
    2&0&2&1&1\\
    2&2&0&1&1\\
    1&1&1&0&2\\
    1&1&1&2&0
    \end{bmatrix}$
    \end{subfigure} 
    \quad 
    \begin{subfigure}{0.28\linewidth}
    $\begin{bmatrix}
    1&t^{2}&t^{2}&t&t\\
    t^{2}&1&t^{2}&t&t\\
    t^{2}&t^{2}&1&t&t\\
    t&t&t&1&t^{2}\\
    t&t&t&t^{2}&1
\end{bmatrix}$
\end{subfigure}

    \caption{The $K_{3,2}$ graph (left) along with its corresponding metric matrix $M_{\Phi^{K_{3,2}}}$ (center) and freeness of trade matrix $\Phi^{K_{3,2}}_{t}$ (right). The entries of the metric are given by the length of the shortest path between two vertices.}
    \label{f:k32}
\end{figure}
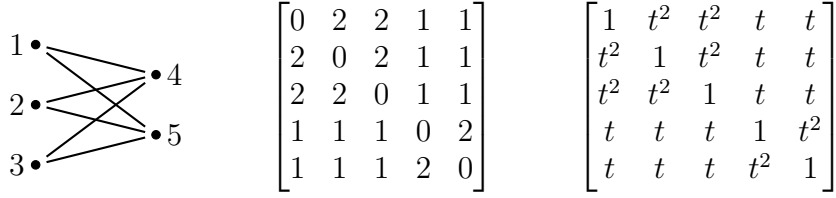

The $K_{n,m}$ bipartite metric on $n+m$ points is given by $m_{i,j} = 1$ if $i\leq n, j>n$
and $m_{i,j}=2$ otherwise if $i\neq j.$ We denote the corresponding freeness-of-trade matrix
by  $\Phi^{K_{n,m}}.$ Figure \ref{f:k32} shows the correspondence between the trade network graph, metric matrix, and freeness-of-trade matrix for the 3 by 2 bipartite case. 
\vspace{1cm}
\begin{theorem}\label{bipspec}
The stability index of $\Phi^{K_{n,m}}$ is equal to $(\sqrt{(m-1)(n-1)})^{-1}.$

If $n, m \geq 2, n+m \geq 5$ then $\Phi^{K_{n,m}}$ is not Mossay-Tabuchi stable.
\end{theorem}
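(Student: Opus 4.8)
The plan is to diagonalize $\Phi^{K_{n,m}}_t$ outright by exploiting the permutation symmetry within each anti-bloc, and then read the stability index off the spectrum. Label the $n$ vertices of the first bloc $A$ and the $m$ vertices of the second $B$. With $m_{ij}\in\{0,1,2\}$ as in Figure \ref{f:k32}, the entries $t^{m_{ij}}$ assemble into the block form
\[
\Phi^{K_{n,m}}_t=\begin{pmatrix} (1-t^2)I_n+t^2 J_n & t\,\mathbf 1_n\mathbf 1_m^{\top}\\[2pt] t\,\mathbf 1_m\mathbf 1_n^{\top} & (1-t^2)I_m+t^2 J_m\end{pmatrix},
\]
where $J_k$ is the all-ones $k\times k$ matrix and $\mathbf 1_k$ the all-ones column vector. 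This matrix commutes with every permutation fixing $A$ and $B$ setwise, so $\mathbb R^{n+m}$ decomposes into invariant subspaces: the vectors supported on $A$ summing to zero (dimension $n-1$), the analogous space on $B$ (dimension $m-1$), and the vectors constant on each of $A$ and $B$ (dimension $2$).

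First I would record the action on the two zero-sum subspaces. A vector $u$ supported on $A$ with $\mathbf 1_n^{\top}u=0$ is annihilated by $J_n$ and by the off-diagonal block, so $\Phi^{K_{n,m}}_t u=(1-t^2)u$; the same holds on $B$. This produces the eigenvalue $1-t^2$ with multiplicity $n+m-2$, strictly positive for $0<t<1$. Restricting instead to the constant-on-each-bloc subspace, in the basis $\{\mathbf 1_n\text{ on }A,\ \mathbf 1_m\text{ on }B\}$ the operator reduces to
\[
T=\begin{pmatrix} 1+(n-1)t^2 & mt\\ nt & 1+(m-1)t^2\end{pmatrix}.
\]
Although $T$ is not symmetric (these bloc-constant vectors are not orthonormal), its eigenvalues are basis independent and hence are the two remaining eigenvalues of $\Phi^{K_{n,m}}_t$. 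A direct computation gives $\operatorname{tr}T=2+(n+m-2)t^2>0$ and, after collecting the $t^2$ terms, the factorization $\det T=(1-t^2)\bigl(1-(n-1)(m-1)t^2\bigr)$.

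The conclusion is then immediate. Because $\operatorname{tr}T>0$, the two eigenvalues of $T$ are both nonnegative exactly when $\det T\ge 0$; together with the positivity of the eigenvalue $1-t^2$ on $(0,1)$, this shows $\Phi^{K_{n,m}}_t$ is positive semi-definite if and only if $\det T\ge 0$. Since $1-t^2>0$ for $0<t<1$, that inequality reduces to $(n-1)(m-1)t^2\le 1$, i.e. $t\le\bigl(\sqrt{(n-1)(m-1)}\bigr)^{-1}$. As $\det T=1>0$ at $t=0$ and its only root in $(0,1)$ occurs at this threshold, positivity holds throughout $[0,t_0]$ precisely for $t_0=(\sqrt{(m-1)(n-1)})^{-1}$, the claimed stability index. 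For the final assertion, $n,m\ge 2$ with $n+m\ge 5$ force at least one factor above $1$, so $(n-1)(m-1)\ge 2$ and the index drops strictly below $1$; by definition $\Phi^{K_{n,m}}$ is then not Mossay-Tabuchi stable.

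I do not expect a serious obstacle once the symmetry reduction is in place, since the remaining work is elementary linear algebra. The one step demanding care is the determinant factorization together with the sign of the trace: these are precisely what guarantee that definiteness is governed by $\det T$ crossing zero rather than by a competing eigenvalue, and they pin the threshold to $(\sqrt{(m-1)(n-1)})^{-1}$. It is also worth confirming that this threshold lies in $(0,1]$ exactly when $n,m\ge 2$, so that the index formula is consistent with the stability dichotomy the theorem asserts.
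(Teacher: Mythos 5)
Your proof is correct and follows essentially the same route as the paper: both isolate the eigenvalue $1-t^2$ of multiplicity $n+m-2$ and then determine when one of the two remaining eigenvalues crosses zero. The only real difference is computational --- the paper solves for those two eigenvalues explicitly from $\Tr(\Phi^{K_{n,m}}_t)$ and $\Tr((\Phi^{K_{n,m}}_t)^2)$, whereas you restrict to the two-dimensional bloc-constant invariant subspace and read the threshold off the factorization $\det T=(1-t^2)\bigl(1-(n-1)(m-1)t^2\bigr)$ together with $\operatorname{tr}T>0$, which is arguably the cleaner way to see where the index sits.
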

\begin{proof}
	The matrix $\Phi^{K_{n,m}}_{t} - (1-t^2)I$ has rank 2, implying that $1-t^2$ is an eigenvalue of $\Phi^{K_{n,m}}_t$ with multiplicity $n+m-2$.
    The remaining two eigenvalues, $\lambda_1$ and $\lambda_2$, can be found by examining the trace of $\Phi^{K_{n,m}}_{t}$ and $(\Phi^{K_{n,m}}_{t})^2$:
    \begin{align*}
        & \Tr(\Phi^{K_{n,m}}_{t}) = \lambda_1 + \lambda_2 + (n+m-2)(1-t^2) = n+m.\\
        & \Tr((\Phi^{K_{n,m}}_{t})^2) = \lambda_1^2 + \lambda_2^2 + (n+m-2)(1-t^2)^2 \hspace{4.5cm}\\ 
        & \hspace{2.35cm} = n+m + (n-1)^2t^4 + (m-1)^2t^4 + 2nm t^2.
    \end{align*}
    Solving the system shows the location of the remaining eigenvalues:
    $$1+\frac{n+m-2}{2}t^2 \pm \frac{\sqrt{4nmt^2+(n-m)^2t^4}}{2}.$$
    Note that $t^{-1}=\sqrt{(m-1)(n-1)}$ solves the equation for $0.$
\end{proof}
Thus, bipartiteness manifests as a negative eigenvalue for $t$ near $1.$ One expects there is a general spectral theory of metrics and freeness-of-trade matrices. There are some results along these lines, for example \cite{steinerberger}.

We now show that for certain bipartite metrics, among many others with similar inconvenience structure, perverse equilibria exist.
\begin{theorem}
Let $\Phi$ be a freeness-of-trade matrix.
Let $\vec{a}$ be the all ones vector.
Let $\vec{b}$ be a $\pm 1$ vector.
Let $L$ be a diagonal matrix such that $\Phi L$ has $\vec{a}$ and $\vec{b}$ as eigenvectors.
Let $\lambda_a$ and $\lambda_b$ be the corresponding eigenvalues.
(For example, $\Phi^{K_{n,n}}_t$ for small enough $t.$)
If $\varepsilon \lambda_b/\lambda_a < -1,$ then the model \eqref{MTEQ} does not admit a unique solution.
\end{theorem}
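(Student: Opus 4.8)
The plan is to exploit the $\pm1$ structure of $\vec b$ to collapse the fixed-point problem onto a two-dimensional invariant set, and then to reduce matters to a single scalar equation handled by the intermediate value theorem. First I would rewrite the equilibrium condition $F(v)=0$ coming from \eqref{MTEQ} as the fixed-point equation $v = \Phi L\,(v^{\circ(-\varepsilon)})$, where $v^{\circ(-\varepsilon)} = (v_1^{-\varepsilon},\dots,v_n^{-\varepsilon})$ is the entrywise power. The structural observation is that $\operatorname{span}\{\vec a,\vec b\}$ is invariant under the nonlinear map $G(v):=\Phi L\,(v^{\circ(-\varepsilon)})$: any $w\in\operatorname{span}\{\vec a,\vec b\}$ has entries $w_i = x + y\,b_i$ taking only the two values $x\pm y$ according to the sign of $b_i$, so $w_i^{-\varepsilon}$ again depends only on $b_i$ and hence $w^{\circ(-\varepsilon)}\in\operatorname{span}\{\vec a,\vec b\}$; applying $\Phi L$ preserves the span because $\vec a,\vec b$ are eigenvectors. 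Thus it suffices to seek equilibria that equal $p$ on the coordinates where $b_i=+1$ and $q$ where $b_i=-1$, with $p,q>0$. The fixed-point equation then becomes the pair
\[
p = \tfrac{(\lambda_a+\lambda_b)p^{-\varepsilon} + (\lambda_a-\lambda_b)q^{-\varepsilon}}{2},\qquad q = \tfrac{(\lambda_a-\lambda_b)p^{-\varepsilon} + (\lambda_a+\lambda_b)q^{-\varepsilon}}{2},
\]
equivalently $p+q = \lambda_a(p^{-\varepsilon}+q^{-\varepsilon})$ and $p-q = \lambda_b(p^{-\varepsilon}-q^{-\varepsilon})$. The diagonal $p=q=\lambda_a^{1/(1+\varepsilon)}$ is the ``symmetric'' equilibrium $v^\ast$ (here one uses $\lambda_a>0$), and the goal is to produce a second solution with $p\neq q$.

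Next I would introduce the ratio $t=q/p\in(0,\infty)$ and eliminate $p$. For $t\neq1$ each equation determines $p^{1+\varepsilon}$, and a solution with that ratio exists precisely when the two determinations agree, i.e. when
\[
H(t) := \lambda_a\,\frac{1+t^{-\varepsilon}}{1+t} \;-\; \lambda_b\,\frac{1-t^{-\varepsilon}}{1-t} \;=\; 0,
\]
after which $p$ is recovered from $p^{1+\varepsilon}=\lambda_a\frac{1+t^{-\varepsilon}}{1+t}>0$. The function $H$ is continuous on $(0,1)$, and the singularity at $t=1$ is removable with $\lim_{t\to1}H(t)=\lambda_a+\varepsilon\lambda_b$. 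The hypothesis $\varepsilon\lambda_b/\lambda_a<-1$ together with $\lambda_a>0$ says exactly that this limit is negative, so $H(t)<0$ for $t$ slightly below $1$; this is precisely the linearized instability of $v^\ast$ in the $\vec b$ direction.

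It remains to exhibit a sign change. As $t\to0^+$ one has $H(t)\sim(\lambda_a+\lambda_b)\,t^{-\varepsilon}$, so $H(t)\to+\infty$ provided $\lambda_a+\lambda_b>0$; the intermediate value theorem then yields a zero $t_0\in(0,1)$, giving an asymmetric equilibrium that is not a multiple of $\vec a$ and hence distinct from $v^\ast$, so \eqref{MTEQ} has at least two solutions. The reflection $(p,q)\mapsto(q,p)$, i.e. $t\mapsto1/t$, produces a third.

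I expect the main obstacle to be justifying $\lambda_a+\lambda_b>0$, which powers the sign change at $t=0$ and is not literally among the hypotheses. The right tool is Perron--Frobenius: since every $\phi_{ij}>0$ and $L$ has positive diagonal, $\Phi L$ is similar to the symmetric positive-entry matrix $L^{1/2}\Phi L^{1/2}$ and is therefore primitive, so its spectral radius is simple, positive, strictly dominant, and attained only at the unique positive eigenvector. As $\vec a$ is a positive eigenvector, $\lambda_a$ must equal that spectral radius, whence $\lambda_a>|\lambda_b|$ and in particular $\lambda_a+\lambda_b>0$. The remaining points are routine: verifying the invariance of $\operatorname{span}\{\vec a,\vec b\}$ and the positivity $p,q>0$, confirming the continuity and removable singularity of $H$, and checking the asymptotics $H\sim(\lambda_a+\lambda_b)t^{-\varepsilon}$ as $t\to0^+$.
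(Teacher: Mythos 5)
Your proposal is correct and follows essentially the same route as the paper: restrict to the invariant two-dimensional span of $\vec a$ and $\vec b$, reduce to a single scalar equation in a ratio variable, and apply the intermediate value theorem between the removable singularity at the trivial solution (where the value $\lambda_a+\varepsilon\lambda_b<0$ encodes the hypothesis) and a positive limit at the far endpoint, with Perron--Frobenius supplying $|\lambda_b|<\lambda_a$. Your variable $t=q/p$ is just the reciprocal of the paper's $u$, so the two scalar equations and IVT endpoints coincide.
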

\begin{proof}
Substitute $w=v^{-\varepsilon}.$
With these parameters we want to solve $w_i^{-1/\varepsilon} = \sum \phi_{ij}L_jw_j.$
Let $\vec{a}$ be the all ones eigenvector of $\Phi$ and let $\vec{b}$ be the eigenvector with ones and negative ones. Let $\lambda_a$ and $\lambda_b$ be the corresponding eigenvalues. Set $w = x\vec{a}+y\vec{b}.$
Substituting in we get the system of equations
	$$1/(x+y)=(\lambda_ax+\lambda_b y)^\varepsilon,$$
	$$1/(x-y)=(\lambda_ax-\lambda_b y)^\varepsilon.$$
Set $z=y/x$ and $\tau = \lambda_b/\lambda_a.$ Note, by the Perron-Frobenius theorem, we must have $|\tau|< 1.$ Dividing the two equations we get
$$(1-z)/(1+z)=(1+\tau z)^\varepsilon/(1-\tau z)^\varepsilon.$$
Make the substitution $u=(1+\tau z)/(1-\tau z).$
We get
    $$(1 + \tau  - u + \tau u)/(-1 + \tau + u + \tau u)=u^\varepsilon.$$
So,
$$0=u^\varepsilon(-1 + \tau + u + \tau u)-(1 + \tau  - u + \tau u).$$
The case $u=1$ trivially gives a solution. Dividing by $u-1$
gives
    $$0=u^\varepsilon + 1 + \tau (u+1)\frac{u^\varepsilon-1}{u-1}.$$
Note, as $u$ goes to $1$ the right hand side is equal to 
    $$2+2\tau \varepsilon.$$
Moreover, the limit as we go to infinity is $+\infty.$
Thus, by the intermediate value theorem, if $\tau\varepsilon<-1,$
then there must be a nontrivial solution for $u.$
Since $u$ is positive, and $u^\varepsilon = (1-z)/(1+z),$ we see $z$ must be between $-1$ and $1.$
(If $\tau$ is near $-1$ this translates to $w$ being approximately some rescaling of $\vec{a}\pm\vec{b},$ which in the model roughly corresponds to all the wages/utility/resources being in one of the bipartite components. Note that as $\varepsilon$ becomes larger, which corresponds to the elasticity of substitution $\sigma$ being near $1,$ it is easier to find perverse equilibria.)  
\end{proof}

\subsection{An example}
Let 
$$\Phi = \bbm
1 & t^2 & t^2 & t  & t & t \\
t^2 & 1 & t^2 & t & t & t \\
t^2 & t^2 & 1 & t & t & t \\
t& t & t & 1 & t^2 & t^2 \\
t & t & t & t^2 & 1 & t^2\\
t & t & t & t^2 & t^2 & 1
\ebm.$$
Set all $L_i$ to be $1.$
The all ones vector has eigenvalue $(1+2t)(1+t),$ the vector with $1$'s in the first three slots and $-1$'s in the latter three has eigenvalue $(1-2t)(1-t).$
Solving as in the proof above, we get
     $$0=u^\varepsilon + 1 + \frac{(1-2t)(1-t)}{(1+2t)(1+t)}(u+1)\frac{u^\varepsilon-1}{u-1}.$$
To guarantee perverse equilibria we need
    $$\frac{(1-2t)(1-t)}{(1+2t)(1+t)}\varepsilon < -1$$
to be negative. That is, $\varepsilon > \abs{\frac{(1+2t)(1+t)}{(1-2t)(1-t)}}.$
\vspace{0.1cm}

To see what these equilibria look like asymptotically, consider the case when $\varepsilon$ goes to $\infty.$ 
Rewriting our previous solution 
$$-1= \frac{(1-2t)(1-t)}{(1+2t)(1+t)}\frac{(u+1)(u^\varepsilon-1)}{(u-1)(u^{\varepsilon}+1)}$$
which tends to 
$$\frac{1-u}{1+u} = \pm \tau$$
which is solved by
$u= \frac{1-\tau}{1+\tau}, \frac{1+\tau}{1-\tau},$
which imply $z=\pm 1,$ which implies $x= \pm y,$ and thus the solution for the $w$ will be approximately a $0-1$ vector, which corresponds to runaway inequality in terms of utility, wages, and so on.

\section{Economic Sanctions}

In this section, we provide illustrative examples of the analysis above to the study of economic sanctions. Because sanctions drastically reduce  trade between targets and senders, they may create bipartite trade networks such as those discussed above, resulting in multiple perverse equilibria. A key factor in these examples is that there are (at least) two sets of countries which do not trade heavily with one another, and instead trade, if it occurs, must go through an intermediary. This can arise the context of economic sanctions, where the sender and target do not trade directly, but instead trade is conducted through third party intermediaries. This is particularly the case when the sanctions-busting state do not have strong trade ties with one another.

We are unable to specify exactly which actors will benefit and which will suffer in a distributional sense due to multiple equilibria, but we can characterize those multiple equilibria, such as the three identified in the previous example as ``perverse'' in that they each exhibit rent seeking, distributional conflict at the cost of general welfare. Indeed, it is exactly in the presence of multiple equilibria that we may expect policy to have the most relevance \citep{Benhabib2001}, an issue we turn to in the next section.

 Below, we discuss such cases, and contrast them with cases which more closely resemble the unique equilibrium under Mossay-Tabuchi stability. In doing so, we provide examples of how the structure of international trade, in particular the ``geometry of inconvenience'' conditions the impact of economic sanctions.

 In these examples, we explore substantive applications of the bipartite family to illustrate the intuition and applicability of the above results. This is but one of many examples of the result presented above, characterised by “anti-blocs.” That is, two “blocs” of countries, which trade exclusively with members of the other bloc, and not with members of their own. Such arrangements have arisen historically due to political factors such as mercantilism and economic sanctions, but also naturally due to other factors such as geography and climate. 
 
 Below we draw on the previous results to guide future research on how the geometry of inconvenience effects the presence in network based rent-seeking.  In other words, how the particular network structure of international trade influences economic and political outcomes. For a seminal treatment of the role of state power in determining the structure of international trade, see \cite{krasner_1976}. We provide a novel, trade-network structure based, general explanation for such rent-seeking behavior to influence the structure of international trade. This result may provide a fruitful path forward for reconciling general equilibrium results in trade theory with results in noncooperative game-theory regarding political rent seeking beyond triangular dead-weight loss.\footnote{Rent-seeking refers to expropriating activities that bring positive returns to the individual but which are detrimental to general welfare \citep{krueger_1974}.} The multiple equilibria we characterize provide a context which provide the space for such competition to occur.

Sanctions may establish two such “anti-blocs” as those identified above. The first comprises the sender(s) and target(s) of economic sanctions. By design, sanctions impact the ease of trade negatively between these states. 
The other bloc consists of sanctions busters, those states which serve as an intermediary to circumvent sanctions. While the implementation of sanctions is politically motivated to coerce or signal intentions in crisis bargaining, sanctions busters are often commercially motivated \citep{early_2015} and firms respond to the market incentives to invest in potential sanctions busting states \citep{barry_kleinberg_2015}. The distortions created by sanctions create opportunities for sanctions busters to exploit the target of sanctions as well firms in the sender state. Such opportunities for rent seeking drive the perverse equilibria we identify above. Our contribution is to show that not all sanctions episodes are created equally in this regard. Furthermore, the presence of multiple equilibria present the opportunity for bargaining among policymakers \citep{venables_1984}. Third parties have strong incentives to circumvent economic sanctions, maintaining and expanding trade relationships with both target and sanctioning states. In doing so, they may be able to extract rents from both.  

Examples include US sanctions on Iran, with the United Arab Emirates acting as a sanctions-buster, US sanctions on South Africa, with the UK and others acting as sanctions busters. Here, the intermediaries voiced support of the sanctions, while still allowing trade to continue for their own domestic firms \citep{early_2009}.

One condition that was identified above was the presence of multiple intermediaries that do not have low-cost trade with one another. The basic insight here is that exchange between the intermediaries may temper their ability to seek rents. In other words, rent seeking should be most severe when there are multiple sanctions busters that are relatively isolated from one another. However, the condition is a matter of degree, and admits some trade between sanctions busters.

This suggests that researchers should move beyond the conventional approach of simply counting the number of sanctions busters, and consider the trade networks between them. Interestingly, fostering trade between sanctions busting states may help curtail rent seeking by them, therefore enhancing the overall efficacy of the sanctions regime. 

Similarly, our results have implications for debates regarding the relative effectiveness of bilateral versus multilateral sanctions \citep{weber2020, miers2002, martin1994, bapat2009}. For instance, \cite{Kaempfer1999} presents evidence that multilateral sanctions often fail due to rent seeking in the target country. We provide important theoretical nuance to this claim.  In the case of bilateral economic sanctions, with one or two sanctions busters perverse equilibria do not exist. However with three or more sanctions busting states they may, depending on the ease of trade between them. In the case of multilateral economic sanctions,  if there is only one sanctions busting state no perverse equilibria exist. However with more than one, perverse equilibria may exist. This highlights an interaction between the number of sanctioning states and the number of sanctions busters that has not been identified previously. 

Our geometry of inconvenience also yields new implications for the number of targeted states. This is important because in the current context of extensive use of economic sanctions as a tool of foreign influence \citep{aidt2022}, many states are targeted simultaneously \citep{drezner2021united}. 
Our results suggest that the more targets of sanctions there are simultaneously, the more likely perverse equilibria are to exist. 

Such considerations are relevant to recent work quantifying the misery and other consequences imposed by economic sanctions \citep{early2022, Ozdamar2021, morgan2023, allen2020, kavalki2020}, particularly because the rent seeking established due to such inefficiencies may persist after sanctions are lifted \citep{Andreas2005, pond2017}. Vested interests form when perverse equilibria exist in this context, and once formed they can be difficult to displace \citep{barry_kleinberg_2015, early2019searching, dorussen2001}. We next turn to an analysis of policy options available to states, to consider the conditions under which they may be able to escape such perverse equilibria.

      %
        
        
        
        
        

\section{Policy Analysis} 
In this section, we use a series of numerical simulations to illustrate how changes to the trade structure, via policy change or other shocks, influence whether the freeness of trade matrix is Mossay-Tabuchi  stable. To do so, we will consider whether a specific trade network configuration results in a unique equilibrium of the type identified by Mossay and Tabuchi, or whether there are perverse, multiple equilibria as we identify above. For instance, the bipartite $K_{4,2}$ trade network results in perverse equilibria even when trade costs are high, such cases are labeled in red below. The $K_{4,2}$ trade network is the top vertex of Figure 3. Similarly, the bipartite $K_{3,3}$ is also not Mossay-Tabuchi stable, even under very high trade costs, and is the bottom right vertex of Figure \ref{fB}. The red color at this vertex indicates that unique equilibrium does not exist in this network. $C_{5,1}$ represents a network in which one member of the right hand side anti-bloc has left. This forms the bottom left vertex of Figure \ref{fB}.    

For each of the images presented in Figures \ref{fB}-\ref{fG}, each vertex of a triangle represents one of the extreme rays of the metric cone, out another way, it represents a specific trade network. Every point of the triangle represents some weighted average of the extreme rays (e.g. an interior point might be $.25*M_1+.25*M_2+.5*M_3$). This allows us to consider the impact of moving from one trade network to another.   A point in the triangle is blue if the specific weighted average is it is Mossay-Tabuchi stable and red if it is not. What this illustrates is the amount of influence an individual actor in a network has as the asymmetry in the anti-blocs is altered.  Each figure is labeled so that the bottom left metric is listed first, then the top metric, and ending with the bottom right. 

In the figures that follow, blue coloring means the corresponding metric matrix will produce a non-perverse freeness-of-trade matrix regardless of what nonzero scaling you give the metric matrix. The red areas are not Mossay-Tabuchi stable. 

\begin{figure}[H]
   \begin{subfigure}[t]{0.45\linewidth}
       \caption{Approaching $C_{5,1}$ from $K_{4,2}$}
    \centering
       \begin{tikzpicture} [baseline = (n2.center), scale=0.45]
        \node (n1) at (-1,0) {}; 
        \node (n2) at (-1,-1) {};
        \node (n3) at (-1,-2) {};
        \node (n4) at (-1, -3) {};
        
        \node (n5) at (1,-0.5) {};
        \node (n6) at (2,-3.5) {};
        
        \node[left] at (n1) {a};
        \node[left] at (n2) {b};
        \node[left] at (n3) {c}; 
        \node[left] at (n4) {d};
        \node[right] at (n5) {e};
         \node[right] at (n6) {f};
        
        \filldraw[black] (n1) circle(2 pt);
        \filldraw[black] (n2) circle(2 pt);
        \filldraw[black] (n3) circle(2 pt);
        \filldraw[black] (n4) circle(2 pt);
        \filldraw[black] (n5) circle(2 pt);
        \filldraw[black] (n6) circle(2 pt);
        
        \draw[black,thick] (n1)--(n5);
        \draw[black,dashed] (n1)--(n6);

        \draw[black,thick] (n2)--(n5);
        \draw[black,dashed] (n2)--(n6);

        \draw[black,thick] (n3)--(n5);
        \draw[black,dashed] (n3)--(n6);

        \draw[black,thick] (n4)--(n5);
        \draw[black,dashed] (n4)--(n6);

    \end{tikzpicture}
   \end{subfigure}
   \begin{subfigure}[t]{0.45\linewidth}
    \centering
    \caption{$K_{4,2}$}
    \centering
    \begin{tikzpicture} [baseline = (n2.center), scale=0.45]
        \node (n1) at (-1,0) {}; 
        \node (n2) at (-1,-1) {};
        \node (n3) at (-1,-2) {};
        \node (n4) at (-1, -3) {};
        
        \node (n5) at (1,-0.5) {};
        \node (n6) at (1,-2.5) {};
        
        \node[left] at (n1) {a};
        \node[left] at (n2) {b};
        \node[left] at (n3) {c}; 
        \node[left] at (n4) {d};
        \node[right] at (n5) {e};
         \node[right] at (n6) {f};
        
        \filldraw[black] (n1) circle(2 pt);
        \filldraw[black] (n2) circle(2 pt);
        \filldraw[black] (n3) circle(2 pt);
        \filldraw[black] (n4) circle(2 pt);
        \filldraw[black] (n5) circle(2 pt);
        \filldraw[black] (n6) circle(2 pt);
        
        \draw[black,thick] (n1)--(n5);
        \draw[black,thick] (n1)--(n6);

        \draw[black,thick] (n2)--(n5);
        \draw[black,thick] (n2)--(n6);

        \draw[black,thick] (n3)--(n5);
        \draw[black,thick] (n3)--(n6);

        \draw[black,thick] (n4)--(n5);
        \draw[black,thick] (n4)--(n6);

    \end{tikzpicture}

   \end{subfigure}

  \begin{center}
   \begin{subfigure}[c]{0.40\linewidth}
       \includegraphics[width=\linewidth]{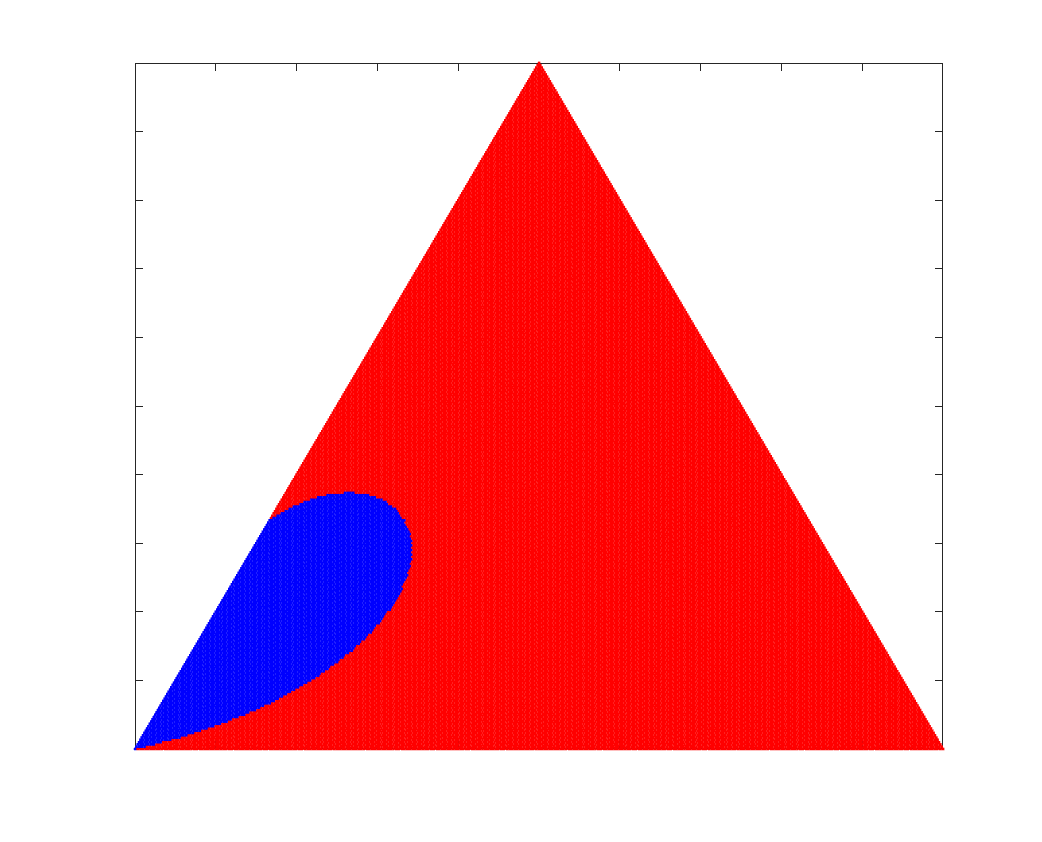}

   \end{subfigure}
    \end{center}
   
    \begin{subfigure}[t]{0.45\linewidth}
       \caption{Approaching $C_{5,1}$ from $K_{3,3}$}
    \centering
    \begin{tikzpicture} [baseline = (n2.center), scale=0.45]
        \node (n1) at (-1,0) {}; 
        \node (n2) at (-1,-1) {};
        \node (n3) at (-1,-2) {};
        
        \node (n4) at (1, 0) {};
        \node (n5) at (1,-1) {};
        \node (n6) at (2,-3) {};
        
        \node[left] at (n1) {a};
        \node[left] at (n2) {b};
        \node[left] at (n3) {c}; 
        \node[right] at (n4) {d};
        \node[right] at (n5) {e};
         \node[right] at (n6) {f};
        
        \filldraw[black] (n1) circle(2 pt);
        \filldraw[black] (n2) circle(2 pt);
        \filldraw[black] (n3) circle(2 pt);
        \filldraw[black] (n4) circle(2 pt);
        \filldraw[black] (n5) circle(2 pt);
        \filldraw[black] (n6) circle(2 pt);
        
        \draw[black,thick] (n1)--(n4);
        \draw[black,thick] (n1)--(n5);
        \draw[black,dashed] (n1)--(n6);
        
        \draw[black,thick] (n2)--(n4);
        \draw[black,thick] (n2)--(n5);
        \draw[black,dashed] (n2)--(n6);
        
        \draw[black,thick] (n3)--(n4);
        \draw[black,thick] (n3)--(n5);
        \draw[black,dashed] (n3)--(n6);
        
    \end{tikzpicture}
   \end{subfigure}
     \begin{subfigure}[t]{0.45\linewidth}
    \centering
    \caption{$K_{3,3}$}
    \begin{tikzpicture} [baseline = (n2.center), scale=0.45]
        \node (n1) at (-1,0) {}; 
        \node (n2) at (-1,-1) {};
        \node (n3) at (-1,-2) {};
        
        \node (n4) at (1, 0) {};
        \node (n5) at (1,-1) {};
        \node (n6) at (1,-2) {};
        
        \node[left] at (n1) {a};
        \node[left] at (n2) {b};
        \node[left] at (n3) {c}; 
        \node[right] at (n4) {d};
        \node[right] at (n5) {e};
         \node[right] at (n6) {f};
        
        \filldraw[black] (n1) circle(2 pt);
        \filldraw[black] (n2) circle(2 pt);
        \filldraw[black] (n3) circle(2 pt);
        \filldraw[black] (n4) circle(2 pt);
        \filldraw[black] (n5) circle(2 pt);
        \filldraw[black] (n6) circle(2 pt);
        
        \draw[black,thick] (n1)--(n4);
        \draw[black,thick] (n1)--(n5);
        \draw[black,thick] (n1)--(n6);
        
        \draw[black,thick] (n2)--(n4);
        \draw[black,thick] (n2)--(n5);
        \draw[black,thick] (n2)--(n6);
        
        \draw[black,thick] (n3)--(n4);
        \draw[black,thick] (n3)--(n5);
        \draw[black,thick] (n3)--(n6);
        
    \end{tikzpicture}
   \end{subfigure}
   
   \caption{\footnotesize{The impact of one state leaving an antibloc on MT Stability. The vertices are $C_{5,1}$ (Bottom Left), $K_{4,2}$ (Top), $K_{3,3}$ (Bottom Right). Interior points are linear combinations of the networks at the vertices. Neither $K_{4,2}$ nor $K_{3,3}$ are MT Stable, one state leaving the trade network from the right hand side anti-bloc induces MT Stability in the $K_{4,2}$ case, but not the $K_{3,3}$ case. The diagrams A-D provide examples of the networks at the vertices.} } \label{fB}
    \end{figure}

We first turn to the question of the impact of one state unilaterally erecting barriers to trade. We refer to this below as a cut metric. In particular, we are interested in whether one state leaving the trade network can induce Mossay-Tabuchi stability. Here, we find that the structure of international trade is crucial to answering this question, and thus broad statements about multilateral vs bilateral sanctions and the number of sanctions-busting states, which are often made by the literature \citep{Kaempfer1999, bapat2009}, mask network geometry specific effects. To understand the implications of changes in policy, the structure of international trade must be considered.

In Figure \ref{fB}, we can see that the cut metric that adds a uniform distance between the first five points and the sixth is pivotal and can induce Mossay-Tabuchi stability when approaching from $K_{4,2}$.  While this alteration cannot force a K$_{3,3}$ metric to become Mossay-Tabuchi stable, it \textit{can} force the K$_{4,2}$ metric to be. 
 This is because this alteration cuts off one of the small anti-bloc on K$_{4,2}$.

\begin{figure}[H]
 \begin{subfigure}[t]{0.45\linewidth}
       \caption{Approaching $C_{1,5}$ from $K_{4,2}$}
    \centering
    \begin{tikzpicture}[baseline = (n2.center), scale=.45]
        \node (n1) at (-2,1) {}; 
        \node (n2) at (-1,-1) {};
        \node (n3) at (-1,-2) {};
        \node (n4) at (-1, -3) {};
        
        \node (n5) at (1,-0.5) {};
        \node (n6) at (1,-2.5) {};
        
        \node[left] at (n1) {a};
        \node[left] at (n2) {b};
        \node[left] at (n3) {c}; 
        \node[left] at (n4) {d};
        \node[right] at (n5) {e};
         \node[right] at (n6) {f};
        
        \filldraw[black] (n1) circle(2 pt);
        \filldraw[black] (n2) circle(2 pt);
        \filldraw[black] (n3) circle(2 pt);
        \filldraw[black] (n4) circle(2 pt);
        \filldraw[black] (n5) circle(2 pt);
        \filldraw[black] (n6) circle(2 pt);

        \draw[black,dashed] (n1)--(n5);
        \draw[black,dashed] (n1)--(n6);

        \draw[black,thick] (n2)--(n5);
        \draw[black,thick] (n2)--(n6);

        \draw[black,thick] (n3)--(n5);
        \draw[black,thick] (n3)--(n6);
        
          \draw[black,thick] (n4)--(n5);
        \draw[black,thick] (n4)--(n6);

    \end{tikzpicture}
   \end{subfigure}
   \begin{subfigure}[t]{0.45\linewidth}
    \centering
    \caption{$K_{4,2}$}
    \centering
    \begin{tikzpicture} [baseline = (n2.center), scale=.45]
        \node (n1) at (-1,0) {}; 
        \node (n2) at (-1,-1) {};
        \node (n3) at (-1,-2) {};
        \node (n4) at (-1, -3) {};
        
        \node (n5) at (1,-0.5) {};
        \node (n6) at (1,-2.5) {};
        
        \node[left] at (n1) {a};
        \node[left] at (n2) {b};
        \node[left] at (n3) {c}; 
        \node[left] at (n4) {d};
        \node[right] at (n5) {e};
         \node[right] at (n6) {f};
        
        \filldraw[black] (n1) circle(2 pt);
        \filldraw[black] (n2) circle(2 pt);
        \filldraw[black] (n3) circle(2 pt);
        \filldraw[black] (n4) circle(2 pt);
        \filldraw[black] (n5) circle(2 pt);
        \filldraw[black] (n6) circle(2 pt);
        
        \draw[black,thick] (n1)--(n5);
        \draw[black,thick] (n1)--(n6);

        \draw[black,thick] (n2)--(n5);
        \draw[black,thick] (n2)--(n6);

        \draw[black,thick] (n3)--(n5);
        \draw[black,thick] (n3)--(n6);

        \draw[black,thick] (n4)--(n5);
        \draw[black,thick] (n4)--(n6);

    \end{tikzpicture}
   \end{subfigure}

   \begin{center}
   \begin{subfigure}[]{0.40\linewidth}
       \includegraphics[width=\linewidth]{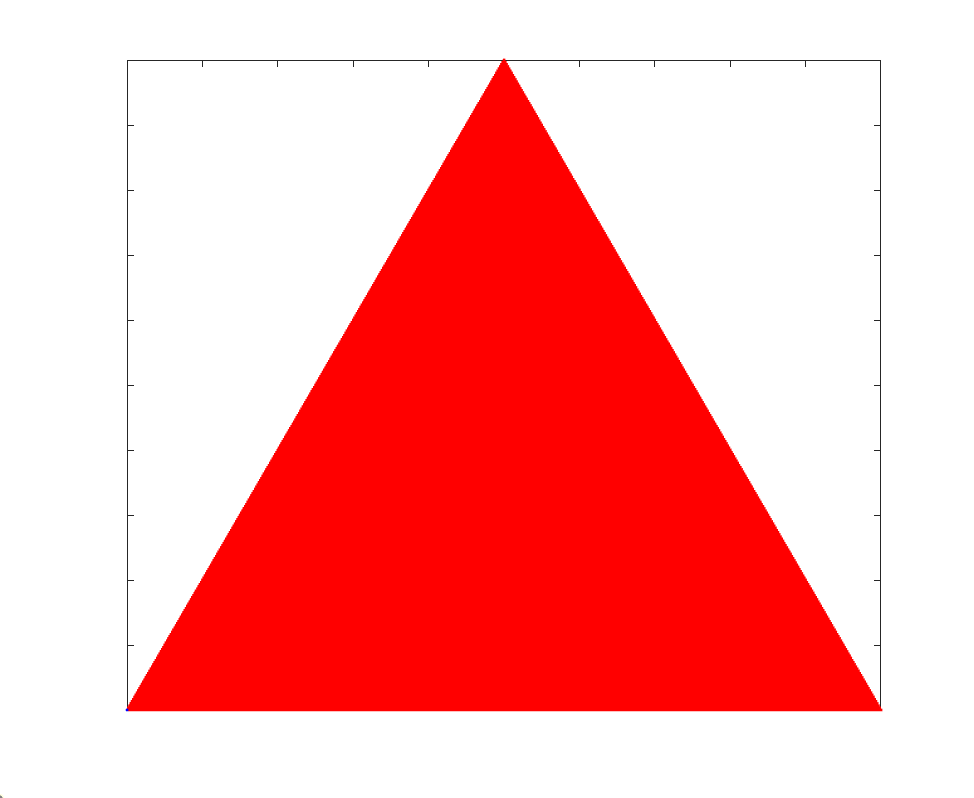}
       
   \end{subfigure}
   \end{center}
   
   \begin{subfigure}[t]{0.45\linewidth}
       \caption{Approaching $C_{1,5}$ from $K_{3,3}$}
    \centering
    \begin{tikzpicture} [ baseline = (n2.center),  scale=.45]
        \node (n1) at (-2,1) {}; 
        \node (n2) at (-1,-1) {};
        \node (n3) at (-1,-2) {};
        
        \node (n4) at (1, 0) {};
        \node (n5) at (1,-1) {};
        \node (n6) at (1,-2) {};
        
        \node[left] at (n1) {a};
        \node[left] at (n2) {b};
        \node[left] at (n3) {c}; 
        \node[right] at (n4) {d};
        \node[right] at (n5) {e};
         \node[right] at (n6) {f};
        
        \filldraw[black] (n1) circle(2 pt);
        \filldraw[black] (n2) circle(2 pt);
        \filldraw[black] (n3) circle(2 pt);
        \filldraw[black] (n4) circle(2 pt);
        \filldraw[black] (n5) circle(2 pt);
        \filldraw[black] (n6) circle(2 pt);
        
        \draw[black,dashed] (n1)--(n4);
        \draw[black,dashed] (n1)--(n5);
        \draw[black,dashed] (n1)--(n6);
        
        \draw[black,thick] (n2)--(n4);
        \draw[black,thick] (n2)--(n5);
        \draw[black,thick] (n2)--(n6);
        
        \draw[black,thick] (n3)--(n4);
        \draw[black,thick] (n3)--(n5);
        \draw[black,thick] (n3)--(n6);
        
    \end{tikzpicture}
   \end{subfigure}
     \begin{subfigure}[t]{0.45\linewidth}
    \centering
    \caption{$K_{3,3}$}
    \begin{tikzpicture}[baseline = (n2.center), scale=0.45]
        \node (n1) at (-1,0) {}; 
        \node (n2) at (-1,-1) {};
        \node (n3) at (-1,-2) {};
        
        \node (n4) at (1, 0) {};
        \node (n5) at (1,-1) {};
        \node (n6) at (1,-2) {};
        
        \node[left] at (n1) {a};
        \node[left] at (n2) {b};
        \node[left] at (n3) {c}; 
        \node[right] at (n4) {d};
        \node[right] at (n5) {e};
         \node[right] at (n6) {f};
        
        \filldraw[black] (n1) circle(2 pt);
        \filldraw[black] (n2) circle(2 pt);
        \filldraw[black] (n3) circle(2 pt);
        \filldraw[black] (n4) circle(2 pt);
        \filldraw[black] (n5) circle(2 pt);
        \filldraw[black] (n6) circle(2 pt);
        
        \draw[black,thick] (n1)--(n4);
        \draw[black,thick] (n1)--(n5);
        \draw[black,thick] (n1)--(n6);
        
        \draw[black,thick] (n2)--(n4);
        \draw[black,thick] (n2)--(n5);
        \draw[black,thick] (n2)--(n6);
        
        \draw[black,thick] (n3)--(n4);
        \draw[black,thick] (n3)--(n5);
        \draw[black,thick] (n3)--(n6);
        
    \end{tikzpicture}
   \end{subfigure}

   \caption{\footnotesize{The impact of one state leaving an antibloc on MT Stability. The vertices are $C_{1,5}$ (Bottom Left), $K_{4,2}$ (Top), $K_{3,3}$ (Bottom Right). Interior points are linear combinations of the networks at the vertices. Neither $K_{4,2}$ nor $K_{3,3}$ are MT Stable, one state leaving the trade network from the left hand side anti-bloc does not induce MT Stability in the $K_{4,2}$ case, nor the $K_{3,3}$ case. The diagrams A-D provide examples of the networks at the vertices.}}\label{fA}
   \end{figure}

 In Figure \ref{fA} we can see that the cut metric that adds a uniform distance between the first point and the remaining five cannot alter the fundamental problematic nature of weighted averages of $K_{4,2}$ and $K_{3,3}$ as in essence this only cuts off one member of the larger anti-bloc and leaves some average of $K_{3,2}$ and $K_{2,3}$. Even if one member of the left hand side cuts trade with members of the other anti-bloc completely, perverse equilibria still may exist.
 
 If we consider the left-hand side anti-bloc to be sanctions busters, this illustrates that when there are many sanctions busters, convincing one of them to stop sanctions busting will not be pivotal, and will therefore have very little impact on the efficacy of the sanctions regime. Conversely, if we consider the left-hand side anti-bloc to be the senders and targets of sanctions, Figure \ref{fA} shows that one sender or target desisting from trading with sanctions busters is not pivotal in this instance, and perverse equilibrium may persist, even if they exit the international trade network completely. 

 In Figure \ref{fC}, instead of a metric that cuts off a single point, (the bottom left vertices of Figures 3 and 4) we instead add the discrete metric, where each point is uniformly separated from every other point.  This  corresponds to  every member of a trading network adding tariffs to trade with every other member, i.e. every state in the system resorting to isolationism.  It is noteworthy that as a network wide strategy for forcing Mossay-Tabuchi stability, this is actually a perfectly viable option. Furthermore, shocks to the cost of fuel could act similarly. Technological advancements facilitating lower cost trade, to the extent they are shared by all trading partners, may have the opposite effect. That is, moving the network from a Mossay-Tabuchi stable status quo to one that is not Mossay-Tabuchi stable. 
 
 \begin{figure}[H]
   \begin{subfigure}[t]{0.45\linewidth}
       \caption{Approaching $D_{6}$ from $K_{4,2}$}
    \centering
       \begin{tikzpicture} [baseline = (n2.center), scale=.3]
        \node (n1) at (-2,2) {}; 
        \node (n2) at (-2,0) {};
        \node (n3) at (-2,-2) {};
        \node (n4) at (-2, -4) {};
        
        \node (n5) at (2,1) {};
        \node (n6) at (2,-3.5) {};
        
        \node[left] at (n1) {a};
        \node[left] at (n2) {b};
        \node[left] at (n3) {c}; 
        \node[left] at (n4) {d};
        \node[right] at (n5) {e};
         \node[right] at (n6) {f};
        
        \filldraw[black] (n1) circle(2 pt);
        \filldraw[black] (n2) circle(2 pt);
        \filldraw[black] (n3) circle(2 pt);
        \filldraw[black] (n4) circle(2 pt);
        \filldraw[black] (n5) circle(2 pt);
        \filldraw[black] (n6) circle(2 pt);
        
        \draw[black,dashed] (n1)--(n5);
        \draw[black,dashed] (n1)--(n6);

        \draw[black,dashed] (n2)--(n5);
        \draw[black,dashed] (n2)--(n6);

        \draw[black,dashed] (n3)--(n5);
        \draw[black,dashed] (n3)--(n6);

        \draw[black,dashed] (n4)--(n5);
        \draw[black,dashed] (n4)--(n6);

    \end{tikzpicture}
   \end{subfigure}
   \begin{subfigure}[t]{0.45\linewidth}
    \centering
    \caption{$K_{4,2}$}
    \centering
    \begin{tikzpicture} [baseline = (n2.center), scale=.4]
        \node (n1) at (-1,0) {}; 
        \node (n2) at (-1,-1) {};
        \node (n3) at (-1,-2) {};
        \node (n4) at (-1, -3) {};
        
        \node (n5) at (1,-0.5) {};
        \node (n6) at (1,-2.5) {};
        
        \node[left] at (n1) {a};
        \node[left] at (n2) {b};
        \node[left] at (n3) {c}; 
        \node[left] at (n4) {d};
        \node[right] at (n5) {e};
         \node[right] at (n6) {f};
        
        \filldraw[black] (n1) circle(2 pt);
        \filldraw[black] (n2) circle(2 pt);
        \filldraw[black] (n3) circle(2 pt);
        \filldraw[black] (n4) circle(2 pt);
        \filldraw[black] (n5) circle(2 pt);
        \filldraw[black] (n6) circle(2 pt);
        
        \draw[black,thick] (n1)--(n5);
        \draw[black,thick] (n1)--(n6);

        \draw[black,thick] (n2)--(n5);
        \draw[black,thick] (n2)--(n6);

        \draw[black,thick] (n3)--(n5);
        \draw[black,thick] (n3)--(n6);

        \draw[black,thick] (n4)--(n5);
        \draw[black,thick] (n4)--(n6);

    \end{tikzpicture}
   \end{subfigure}

   \begin{center}
   \begin{subfigure}[c]{0.40\linewidth}
    \includegraphics[width=\linewidth]{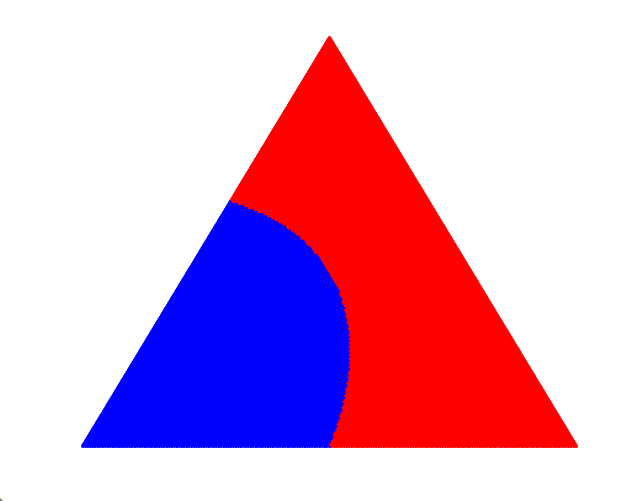}

   \end{subfigure}

   \end{center}

    \begin{subfigure}[t]{0.45\linewidth}
       \caption{Approaching $D_{6}$ from $K_{3,3}$}
    \centering
    \begin{tikzpicture} [baseline = (n2.center), scale=.4]
        \node (n1) at (-2,1) {}; 
        \node (n2) at (-2,-1) {};
        \node (n3) at (-2,-3) {};
        
        \node (n4) at (2, 1) {};
        \node (n5) at (2, -1) {};
        \node (n6) at (2,-3) {};
        
        \node[left] at (n1) {a};
        \node[left] at (n2) {b};
        \node[left] at (n3) {c}; 
        \node[right] at (n4) {d};
        \node[right] at (n5) {e};
         \node[right] at (n6) {f};
        
        \filldraw[black] (n1) circle(2 pt);
        \filldraw[black] (n2) circle(2 pt);
        \filldraw[black] (n3) circle(2 pt);
        \filldraw[black] (n4) circle(2 pt);
        \filldraw[black] (n5) circle(2 pt);
        \filldraw[black] (n6) circle(2 pt);
        
        \draw[black,dashed] (n1)--(n4);
        \draw[black,dashed] (n1)--(n5);
        \draw[black,dashed] (n1)--(n6);
        
        \draw[black,dashed] (n2)--(n4);
        \draw[black,dashed] (n2)--(n5);
        \draw[black,dashed] (n2)--(n6);
        
        \draw[black,dashed] (n3)--(n4);
        \draw[black,dashed] (n3)--(n5);
        \draw[black,dashed] (n3)--(n6);
        
    \end{tikzpicture}
   \end{subfigure}
     \begin{subfigure}[t]{0.45\linewidth}
    \centering
    \caption{$K_{3,3}$}
    \begin{tikzpicture} [baseline = (n2.center), scale=.5]
        \node (n1) at (-1,0) {}; 
        \node (n2) at (-1,-1) {};
        \node (n3) at (-1,-2) {};
        
        \node (n4) at (1, 0) {};
        \node (n5) at (1,-1) {};
        \node (n6) at (1,-2) {};
        
        \node[left] at (n1) {a};
        \node[left] at (n2) {b};
        \node[left] at (n3) {c}; 
        \node[right] at (n4) {d};
        \node[right] at (n5) {e};
         \node[right] at (n6) {f};
        
        \filldraw[black] (n1) circle(2 pt);
        \filldraw[black] (n2) circle(2 pt);
        \filldraw[black] (n3) circle(2 pt);
        \filldraw[black] (n4) circle(2 pt);
        \filldraw[black] (n5) circle(2 pt);
        \filldraw[black] (n6) circle(2 pt);
        
        \draw[black,thick] (n1)--(n4);
        \draw[black,thick] (n1)--(n5);
        \draw[black,thick] (n1)--(n6);
        
        \draw[black,thick] (n2)--(n4);
        \draw[black,thick] (n2)--(n5);
        \draw[black,thick] (n2)--(n6);
        
        \draw[black,thick] (n3)--(n4);
        \draw[black,thick] (n3)--(n5);
        \draw[black,thick] (n3)--(n6);
        
    \end{tikzpicture}
   \end{subfigure}
   
   \caption{\footnotesize{The impact of universal trade barriers on MT Stability. The vertices are $D_6$ (Bottom Left), $K_{4,2}$ (Top), $K_{3,3}$ (Bottom Right). $D_6$ is the discrete metric, in which all states in the network erect barriers to trade with all others. Interior points are linear combinations of the networks at the vertices. Neither $K_{4,2}$ nor $K_{3,3}$ are MT Stable. However, by universal introduction of barriers  to trade (approaching $D_6$) both can be made MT Stable. The diagrams A-D provide examples of the networks at the vertices.}} \label{fC}
    \end{figure}

\begin{figure}[htbp] 
    \begin{subfigure}[]{0.45\linewidth}
    \centering
    \caption{Approaching $C_{1,5}$ from $K_{2,4}$}
    \centering
    \begin{tikzpicture} [baseline = (n2.center), scale=.3]
        \node (n1) at (-3,1.75) {}; 
        \node (n2) at (-1,-1.25) {};
        
        \node (n3) at (1, 1.5) {};
        \node (n4) at (1, 0) {};
        \node (n5) at (1, -1.5) {};
        \node (n6) at (1,-3) {};
        
        \node[left] at (n1) {a};
        \node[left] at (n2) {b};
        \node[right] at (n3) {c}; 
        \node[right] at (n4) {d};
        \node[right] at (n5) {e};
         \node[right] at (n6) {f};
        
        \filldraw[black] (n1) circle(2 pt);
        \filldraw[black] (n2) circle(2 pt);
        \filldraw[black] (n3) circle(2 pt);
        \filldraw[black] (n4) circle(2 pt);
        \filldraw[black] (n5) circle(2 pt);
        \filldraw[black] (n6) circle(2 pt);

         \draw[black,dashed] (n1)--(n3);
        \draw[black, dashed] (n1)--(n4);
        \draw[black,dashed] (n1)--(n5);
        \draw[black,dashed ] (n1)--(n6);

         \draw[black,thick] (n2)--(n3);
        \draw[black,thick] (n2)--(n4);
        \draw[black,thick] (n2)--(n5);
        \draw[black,thick] (n2)--(n6);
        
    \end{tikzpicture}
   \end{subfigure}
   \begin{subfigure}[]{0.45\linewidth}
    \centering
    \caption{$K_{2,4}$}
    \centering
    \begin{tikzpicture} [baseline = (n2.center), scale=.35]
        \node (n1) at (-1,0.75) {}; 
        \node (n2) at (-1,-1.25) {};
        
        \node (n3) at (1, 1.5) {};
        \node (n4) at (1, 0) {};
        \node (n5) at (1, -1.5) {};
        \node (n6) at (1,-3) {};
        
        \node[left] at (n1) {a};
        \node[left] at (n2) {b};
        \node[right] at (n3) {c}; 
        \node[right] at (n4) {d};
        \node[right] at (n5) {e};
         \node[right] at (n6) {f};
        
        \filldraw[black] (n1) circle(2 pt);
        \filldraw[black] (n2) circle(2 pt);
        \filldraw[black] (n3) circle(2 pt);
        \filldraw[black] (n4) circle(2 pt);
        \filldraw[black] (n5) circle(2 pt);
        \filldraw[black] (n6) circle(2 pt);

         \draw[black,thick] (n1)--(n3);
        \draw[black,thick] (n1)--(n4);
        \draw[black,thick] (n1)--(n5);
        \draw[black,thick] (n1)--(n6);

         \draw[black,thick] (n2)--(n3);
        \draw[black,thick] (n2)--(n4);
        \draw[black,thick] (n2)--(n5);
        \draw[black,thick] (n2)--(n6);
        
    \end{tikzpicture}
   \end{subfigure}

   \begin{center}
        \begin{subfigure}[c]{0.40\linewidth}
       \includegraphics[width=\linewidth]{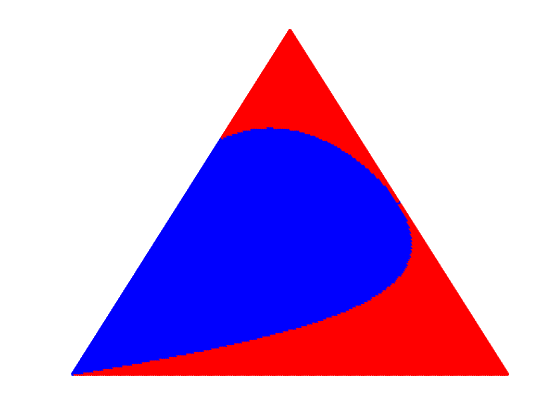}
   \end{subfigure}
   \end{center}

    \begin{subfigure}[t]{0.45\linewidth}
       \caption{Approaching $C_{1,5}$ from $K_{4,2}$}
    \centering
    \begin{tikzpicture} [baseline = (n2.center), scale=.35]
        \node (n1) at (-2,1) {}; 
        \node (n2) at (-1,-1) {};
        \node (n3) at (-1,-2) {};
        \node (n4) at (-1, -3) {};
        
        \node (n5) at (1,-0.5) {};
        \node (n6) at (1,-2.5) {};
        
        \node[left] at (n1) {a};
        \node[left] at (n2) {b};
        \node[left] at (n3) {c}; 
        \node[left] at (n4) {d};
        \node[right] at (n5) {e};
         \node[right] at (n6) {f};
        
        \filldraw[black] (n1) circle(2 pt);
        \filldraw[black] (n2) circle(2 pt);
        \filldraw[black] (n3) circle(2 pt);
        \filldraw[black] (n4) circle(2 pt);
        \filldraw[black] (n5) circle(2 pt);
        \filldraw[black] (n6) circle(2 pt);

        \draw[black,dashed] (n1)--(n5);
        \draw[black,dashed] (n1)--(n6);

        \draw[black,thick] (n2)--(n5);
        \draw[black,thick] (n2)--(n6);

        \draw[black,thick] (n3)--(n5);
        \draw[black,thick] (n3)--(n6);
        
          \draw[black,thick] (n4)--(n5);
        \draw[black,thick] (n4)--(n6);

    \end{tikzpicture}
   \end{subfigure}
   \begin{subfigure}[t]{0.45\linewidth}
    \centering
    \caption{$K_{4,2}$}
    \centering
    \begin{tikzpicture} [baseline = (n2.center), scale=.35]
        \node (n1) at (-1,0) {}; 
        \node (n2) at (-1,-1) {};
        \node (n3) at (-1,-2) {};
        \node (n4) at (-1, -3) {};
        
        \node (n5) at (1,-0.5) {};
        \node (n6) at (1,-2.5) {};
        
        \node[left] at (n1) {a};
        \node[left] at (n2) {b};
        \node[left] at (n3) {c}; 
        \node[left] at (n4) {d};
        \node[right] at (n5) {e};
         \node[right] at (n6) {f};
        
        \filldraw[black] (n1) circle(2 pt);
        \filldraw[black] (n2) circle(2 pt);
        \filldraw[black] (n3) circle(2 pt);
        \filldraw[black] (n4) circle(2 pt);
        \filldraw[black] (n5) circle(2 pt);
        \filldraw[black] (n6) circle(2 pt);
        
        \draw[black,thick] (n1)--(n5);
        \draw[black,thick] (n1)--(n6);

        \draw[black,thick] (n2)--(n5);
        \draw[black,thick] (n2)--(n6);

        \draw[black,thick] (n3)--(n5);
        \draw[black,thick] (n3)--(n6);

        \draw[black,thick] (n4)--(n5);
        \draw[black,thick] (n4)--(n6);

    \end{tikzpicture}
   \end{subfigure}
   \caption{\footnotesize{The impact of one state leaving a small bloc versus one state leaving a large bloc. The vertices are $C_{1,5}$ (Bottom Left), $K_{2,4}$ (Top), $K_{4,2}$ (Bottom Right). $C_{1,5}$ is the cut metric in which one member of the left hand side antibloc leaves the trading network. Interior points are linear combinations of the networks at the vertices. Neither $K_{2,4}$ nor $K_{4,2}$ are MT Stable. A member leaving from the antibloc of size 2 induces MT stability, while a member leaving from the antibloc of size 4 does not. }}\label{fG}
    \end{figure}

  Figure \ref{fG} illustrates that where a cut is placed is important.  While $K_{4,2}$ and $K_{2,4}$ are essentially the same up to some rearrangement of a picture it is worth noting that \textit{where} distance is added is vital.  Adding $C_{1,5}$ to $K_{2,4}$, in essence, breaks up the small anti-block leaving a copy of $K_{4,1}$.  However, adding $C_{1,5}$ to $K_{4,2}$ breaks off one of the members of the large anti-bloc leaving a copy of $K_{3,2}$.  As a result, the Mossay-Tabuchi stable region of this subcone is highly asymmetric. As the figure shows, a unilaterally increasing barriers to trade pivotal to inducing Mossay-Tabuchi stability when they are part of the small group of size two, and does not induce Mossay-Tabuchi stability when they are part of the large group of size four. 

As these examples show, considering the geometry of trade freeness, i.e. the geometry of inconvenience, is central to understanding the impact of policy interventions. We next turn to a discussion of implications for future research in this area. 

\FloatBarrier
\section{Conclusion}

Above, we show that the network of trade ties may form a geometry of inconvenience in which multiple perverse equilibria are possible. In particular, we identify bipartite trade networks, those which have at least two anti-blocs in which there is relatively lower cost across blocs and relatively higher cost trade within them as an example of one such geometry. We consider how such bipartite trade networks may form due to economic sanctions. Next we discussed the impact of a variety of unilateral and multilateral policy interventions. In conclusion, we discuss implications for attitudes toward globalization, other ways in which geometries of inconvenience may arise and their implications, and avenues for future research. 

While the model presented above is not dynamic, we offer some tentative  implications of the analysis above for attitudes toward globalization.  In particular, in partitioned systems, more extensive trade in the trade ties in a bipartite trade network may decrease general welfare due to increases in rent seeking, in addition to distributional consequences. This suggests a logic distinct from the conventional winners and losers from trade and imperfect redistribution mechanism suggested for for globalization backlash \citep{milner_1999}.  Rather than redistribution, losers due to trade network based rent-seeking may be placated by establishing new trade ties to undermine the status quo bipartite structure, but at the same time may oppose more extensive trade with status quo trade partners, perhaps favoring measures that depress trade, such as border walls \citep{carter2020}. 

In addition, future research should investigate the historical implications of geometries of inconvenience. For instance, mercantilist trading networks may be better understood using the framework we introduce here. Mercantilism was practiced during colonization to various degrees, limiting both home country and colony trading partners. Those colonies which acted as an intermediary often thrived. Those that were forced to use the colonizing country as an intermediary, and forgo trade with nearby trading partners due to imperial control floundered, while their colonizers reaped not only great profits, but also political rents, due to the deliberate structure of international trade \citep{zahedieh_2010, ekelund1981mercantilism, jones_1996, rommelse_2010}. 

This example helps illustrate the differing implications of the extensive vs intensive margins of trade, and the structures they create, that we identify. On the extensive margin, certain trade ties were restricted by colonizers, often limiting colonies to only trade with them, however on the intensive margin, colonizing states pushed for greater volumes of trade. This comports with our findings above regarding rent-seeking and the volume of trade in bipartite trade structures. Similarly, the geometry of inconvenience we identify helps us better understand the true welfare consequences of more free international trade within a partitioned  system.     

The framework introduced here may also be applied to better understand the role of geography in international trade \citep{GERVAIS2019331}. For instance, landlocked states may also form examples of the bipartite family.  A landlocked state, to reach global markets, must conduct trade through transit countries, some of which are also isolated. They often have multiple options of transit countries, with limited trade between the intermediaries themselves. Thus, again, we see a bipartite structure of international trade. Examples of perverse equilibria in this context are numerous \citep{arvis_2010}. 

In each of these examples, we see a new role of international structure in international trade.  Structural approaches have typically focused on the distribution of power or economic size. Here we have instead proposed a novel mechanism, the geometry of trade freeness, and shown that this factor is central to understanding economic outcomes and political goals. 

The results presented here also complement efforts to better understand the role of networks in international trade policy \citep{farrell2019, joshi2016, kinne2012, manger2012, cranmer2014}. The analysis presented here moves beyond the standard approaches which tend to focus on network centrality and density \citep{joshi2023}. We present a novel, policy-relevant, geometry of inconvenience to shed new light on the impacts of economic sanctions.

\bibliography{references}

\begin{thebibliography}{}

\bibitem[Aidt et~al., 2021]{aidt2022}
Aidt, T.~S., Albornoz, F., and Hauk, E. (2021).
\newblock Foreign influence and domestic policy.
\newblock {\em Journal of Economic Literature}, 59(2):426--87.

\bibitem[Allen et~al., 2020]{allen2020}
Allen, S.~H., Cilizoglu, M., Lektzian, D., and Su, Y.-H. (2020).
\newblock The consequences of economic sanctions.
\newblock {\em International Studies Perspectives}, 21(4):438--477.

\bibitem[Andreas, 2005]{Andreas2005}
Andreas, P. (2005).
\newblock {Criminalizing Consequences of Sanctions: Embargo Busting and Its
  Legacy}.
\newblock {\em International Studies Quarterly}, 49(2):335--360.

\bibitem[Arora et~al., 2008]{naor}
Arora, S., Lee, J., and Naor, A. (2008).
\newblock Euclidean distortion and the sparsest cut.
\newblock {\em Journal of the American Mathematical Society}, 21(1):1--21.

\bibitem[Arora et~al., 2007]{naor2}
Arora, S., Lee, J.~R., and Naor, A. (2007).
\newblock Fr{\'e}chet embeddings of negative type metrics.
\newblock {\em Discrete \& Computational Geometry}, 38(4):726--739.

\bibitem[Arvis et~al., 2010]{arvis_2010}
Arvis, J.-F., Marteau, J.-F., and Raballand, G. (2010).
\newblock {\em The Cost of Being Landlocked: Logistics Costs and Supply Chain
  Reliability}.
\newblock The World Bank, Washington D.C.

\bibitem[Avis, 1980]{avis}
Avis, D. (1980).
\newblock On the extreme rays of the metric cone.
\newblock {\em Canadian Journal of Mathematics}, 32(1):126--144.

\bibitem[Bapat and Morgan, 2009]{bapat2009}
Bapat, N.~A. and Morgan, T.~C. (2009).
\newblock Multilateral versus unilateral sanctions reconsidered: A test using
  new data.
\newblock {\em International Studies Quarterly}, 53(4):1075--1094.

\bibitem[Barry and Kleinberg, 2015]{barry_kleinberg_2015}
Barry, C.~M. and Kleinberg, K.~B. (2015).
\newblock Profiting from sanctions: Economic coercion and us foreign direct
  investment in third-party states.
\newblock {\em International Organization}, 69(4):881–912.

\bibitem[Benhabib et~al., 2001]{Benhabib2001}
Benhabib, J., Schmitt-Grohé, S., and Uribe, M. (2001).
\newblock Monetary policy and multiple equilibria.
\newblock {\em The American Economic Review}, 91(1):167--186.

\bibitem[Carter and Poast, 2020]{carter2020}
Carter, D.~B. and Poast, P. (2020).
\newblock Barriers to trade: how border walls affect trade relations.
\newblock {\em International Organization}, 74(1):165--185.

\bibitem[Chaney, 2008]{chaney_2008}
Chaney, T. (2008).
\newblock Distorted gravity: The intensive and extensive margins of
  international trade.
\newblock {\em The American Economic Review}, 98(4):1707--1721.

\bibitem[Chawla et~al., 2008]{acmnotes}
Chawla, S., Gupta, A., and R{\"a}cke, H. (2008).
\newblock Embeddings of negative-type metrics and an improved approximation to
  generalized sparsest cut.
\newblock {\em ACM Transactions on Algorithms (TALG)}, 4(2):1--18.

\bibitem[Cranmer et~al., 2014]{cranmer2014}
Cranmer, S.~J., Heinrich, T., and Desmarais, B.~A. (2014).
\newblock Reciprocity and the structural determinants of the international
  sanctions network.
\newblock {\em Social Networks}, 36:5--22.

\bibitem[Dorussen and Mo, 2001]{dorussen2001}
Dorussen, H. and Mo, J. (2001).
\newblock Ending economic sanctions: Audience costs and rent-seeking as
  commitment strategies.
\newblock {\em The Journal of Conflict Resolution}, 45(4):395--426.

\bibitem[Drezner, 2021]{drezner2021united}
Drezner, D.~W. (2021).
\newblock The united states of sanctions: The use and abuse of economic
  coercion.
\newblock {\em Foreign Aff.}, 100:142.

\bibitem[Early, 2009]{early_2009}
Early, B. (2009).
\newblock {Sleeping With Your Friends' Enemies: An Explanation of
  Sanctions-Busting Trade}.
\newblock {\em International Studies Quarterly}, 53(1):49--71.

\bibitem[Early, 2015]{early_2015}
Early, B. (2015).
\newblock {\em Busted Sanctions: Explaining Why Economic Sanctions Fail}.
\newblock Stanford University Press.

\bibitem[Early and Peksen, 2019]{early2019searching}
Early, B. and Peksen, D. (2019).
\newblock Searching in the shadows: The impact of economic sanctions on
  informal economies.
\newblock {\em Political Research Quarterly}, 72(4):821--834.

\bibitem[Early and Peksen, 2022]{early2022}
Early, B.~R. and Peksen, D. (2022).
\newblock {Does Misery Love Company? Analyzing the Global Suffering Inflicted
  by US Economic Sanctions}.
\newblock {\em Global Studies Quarterly}, 2(2):ksac013.

\bibitem[Ekelund and Tollison, 1981]{ekelund1981mercantilism}
Ekelund, R. and Tollison, R. (1981).
\newblock {\em Mercantilism as a Rent-seeking Society: Economic Regulation in
  Historical Perspective}.
\newblock Texas A \& M University economics series. Texas A\&M University
  Press.

\bibitem[Farrell and Newman, 2019]{farrell2019}
Farrell, H. and Newman, A.~L. (2019).
\newblock {Weaponized Interdependence: How Global Economic Networks Shape State
  Coercion}.
\newblock {\em International Security}, 44(1):42--79.

\bibitem[Fujita et~al., 1999]{fujita1999}
Fujita, M., Krugman, P., and Venables, A.~J. (1999).
\newblock {\em {The Spatial Economy: Cities, Regions, and International
  Trade}}.
\newblock The MIT Press.

\bibitem[Gervais and Jensen, 2019]{GERVAIS2019331}
Gervais, A. and Jensen, J.~B. (2019).
\newblock The tradability of services: Geographic concentration and trade
  costs.
\newblock {\em Journal of International Economics}, 118:331--350.

\bibitem[Grishukhin, 1992]{grish}
Grishukhin, V.~P. (1992).
\newblock Computing extreme rays of the metric cone for seven points.
\newblock {\em European Journal of Combinatorics}, 13(3):153--165.

\bibitem[Jones and Ville, 1996]{jones_1996}
Jones, S. R.~H. and Ville, S.~P. (1996).
\newblock Efficient transactors or rent-seeking monopolists? the rationale for
  early chartered trading companies.
\newblock {\em The Journal of Economic History}, 56(4):898--915.

\bibitem[Joshi and Mahmud, 2016]{joshi2016}
Joshi, S. and Mahmud, A.~S. (2016).
\newblock Sanctions in networks: “the most unkindest cut of all”.
\newblock {\em Games and Economic Behavior}, 97:44--53.

\bibitem[Joshi et~al., 2023]{joshi2023}
Joshi, S., Mahmud, A.~S., Nandy, A., and Sarangi, S. (2023).
\newblock Sanctions in directed trade networks.
\newblock {\em Review of International Economics}.

\bibitem[Kaempfer and Lowenberg, 1999]{Kaempfer1999}
Kaempfer, W.~H. and Lowenberg, A.~D. (1999).
\newblock Unilateral versus multilateral international sanctions: A public
  choice perspective.
\newblock {\em International Studies Quarterly}, 43(1):37--58.

\bibitem[Kavakl\i{} et~al., 2020]{kavalki2020}
Kavakl\i{}, K.~C., Chatagnier, J.~T., and Hatipo\u{g}lu, E. (2020).
\newblock The power to hurt and the effectiveness of international sanctions.
\newblock {\em The Journal of Politics}, 82(3):879--894.

\bibitem[Khot and Vishnoi, 2015]{khot2015unique}
Khot, S.~A. and Vishnoi, N.~K. (2015).
\newblock The unique games conjecture, integrality gap for cut problems and
  embeddability of negative-type metrics into $\ell^1$.
\newblock {\em Journal of the ACM (JACM)}, 62(1):1--39.

\bibitem[Kinne, 2012]{kinne2012}
Kinne, B.~J. (2012).
\newblock Multilateral trade and militarized conflict: Centrality, openness,
  and asymmetry in the global trade network.
\newblock {\em The Journal of Politics}, 74(1):308--322.

\bibitem[Koenig and Battiston, 2009]{koenig2009graph}
Koenig, M.~D. and Battiston, S. (2009).
\newblock From graph theory to models of economic networks. a tutorial.
\newblock {\em Networks, topology and dynamics}, 613:23--63.

\bibitem[Krasner, 1976]{krasner_1976}
Krasner, S.~D. (1976).
\newblock State power and the structure of international trade.
\newblock {\em World Politics}, 28(3):317–347.

\bibitem[Krueger, 1974]{krueger_1974}
Krueger, A.~O. (1974).
\newblock The political economy of the rent-seeking society.
\newblock {\em The American Economic Review}, 64(3):291--303.

\bibitem[Krugman, 1980]{krugman1980}
Krugman, P. (1980).
\newblock Scale economies, product differentiation, and the pattern of trade.
\newblock {\em The American Economic Review}, 70(5):950--959.

\bibitem[Manger et~al., 2012]{manger2012}
Manger, M.~S., Pickup, M.~A., and Snijders, T. A.~B. (2012).
\newblock A hierarchy of preferences: A longitudinal network analysis approach
  to pta formation.
\newblock {\em The Journal of Conflict Resolution}, 56(5):853--878.

\bibitem[Martin, 1994]{martin1994}
Martin, L.~L. (1994).
\newblock {\em Coercive cooperation: Explaining multilateral economic
  sanctions}.
\newblock Princeton University Press.

\bibitem[Mercenier, 1995]{mercenier1995}
Mercenier, J. (1995).
\newblock Nonuniqueness of solutions in applied general equilibrium models with
  scale economies and imperfect competition.
\newblock {\em Economic Theory}, 6:161--177.

\bibitem[Miers and Morgan, 2002]{miers2002}
Miers, A. and Morgan, T. (2002).
\newblock Multilateral sanctions and foreign policy success: Can too many cooks
  spoil the broth?
\newblock {\em International Interactions}, 28(2):117--136.

\bibitem[Milner, 1999]{milner_1999}
Milner, H.~V. (1999).
\newblock The political economy of international trade.
\newblock {\em Annual Review of Political Science}, 2(1):91--114.

\bibitem[Morgan et~al., 2023]{morgan2023}
Morgan, T.~C., Syropoulos, C., and Yotov, Y.~V. (2023).
\newblock Economic sanctions: Evolution, consequences, and challenges.
\newblock {\em The Journal of Economic Perspectives}, 37(1):pp. 3--30.

\bibitem[Mossay and Tabuchi, 2015]{MT15}
Mossay, P. and Tabuchi, T. (2015).
\newblock Preferential trade agreements harm third countries.
\newblock {\em The Economic Journal}, 125(589):1964--1985.

\bibitem[Ossa, 2011]{ossa_2011}
Ossa, R. (2011).
\newblock A “new trade” theory of gatt/wto negotiations.
\newblock {\em Journal of Political Economy}, 119(1):122--152.

\bibitem[Ozdamar and Shahin, 2021]{Ozdamar2021}
Ozdamar, O. and Shahin, E. (2021).
\newblock {Consequences of Economic Sanctions: The State of the Art and Paths
  Forward}.
\newblock {\em International Studies Review}, 23(4):1646--1671.

\bibitem[Paulsen and Raghupathi, 2016]{Paulsen}
Paulsen, V.~I. and Raghupathi, M. (2016).
\newblock {\em An introduction to the theory of reproducing kernel Hilbert
  spaces}, volume 152.
\newblock Cambridge university press.

\bibitem[Pond, 2017]{pond2017}
Pond, A. (2017).
\newblock Economic sanctions and demand for protection.
\newblock {\em Journal of Conflict Resolution}, 61(5):1073--1094.

\bibitem[Rommelse, 2010]{rommelse_2010}
Rommelse, G. (2010).
\newblock The role of mercantilism in anglo-dutch political relations, 1650-74.
\newblock {\em The Economic History Review}, 63(3):591--611.

\bibitem[Steinerberger, 2022]{steinerberger}
Steinerberger, S. (2022).
\newblock The first eigenvector of a distance matrix is nearly constant.

\bibitem[Venables, 1984]{venables_1984}
Venables, A.~J. (1984).
\newblock Multiple equilibria in the theory of international trade with
  monopolistically competitive commodities.
\newblock {\em Journal of International Economics}, 16(1):103--121.

\bibitem[Venables, 1987]{venables_1987}
Venables, A.~J. (1987).
\newblock Trade and trade policy with differentiated products: A
  chamberlinian-ricardian model.
\newblock {\em The Economic Journal}, 97(387):700--717.

\bibitem[Weber and Schneider, 2020]{weber2020}
Weber, P.~M. and Schneider, G. (2020).
\newblock How many hands to make sanctions work? comparing eu and us
  sanctioning efforts.
\newblock {\em European Economic Review}, 130:103595.

\bibitem[Zahedieh, 2010]{zahedieh_2010}
Zahedieh, N. (2010).
\newblock Regulation, rent-seeking, and the glorious revolution in the english
  atlantic economy.
\newblock {\em The Economic History Review}, 63(4):865--890.

\end{thebibliography}
\bibliographystyle{apalike}

\end{document}